\let\mathcal\mathscr
\numberwithin{equation}{section}
\newtheorem{theorem}{Theorem}[section] 
\newtheorem{lemma}[theorem]{Lemma}
\theoremstyle{definition}
\newtheorem{remark}[theorem]{Remark}
\renewcommand{\phi}{\varphi}
\renewcommand{\leq}{\leqslant}
\renewcommand{\geq}{\geqslant}
 \renewcommand{\b}{\mathbf{b}}
\DeclareSymbolFont{bbold}{U}{bbold}{m}{n}
\DeclareSymbolFontAlphabet{\mathbbold}{bbold}
\newcommand{\md}[1]{  \left(\textnormal{mod}\ #1\right)}
\renewcommand{\b}{\mathbf}
\renewcommand{\epsilon}{\varepsilon}
\renewcommand{\leq}{\leqslant}
\renewcommand{\geq}{\geqslant}
\renewcommand{\#}{\sharp}
\title[Statistics of small prime quadratic non-residues]{Statistics of small prime quadratic non-residues}
\author{Efthymios Sofos} 
\address{Department of Mathematics\\
University of Glasgow  \\ G12~8QQ United Kingdom}
\email{efthymios.sofos@glasgow.ac.uk}
\subjclass[2010] {
11N60, 
11A15. 
 } 
\date{}
\begin{document}
\begin{abstract} 
We prove that the average of the 
$k$-th smallest prime quadratic non-residue modulo a   prime
 approximates    the $2k$-th smallest prime. 
\end{abstract}

\maketitle

\setcounter{tocdepth}{1}
\tableofcontents

\section{Introduction}  What is the expected size of the $k$-th smallest quadratic non-residue modulo $p$?
This question is interesting only when we exclude certain obvious choices. For example, 
 all small integer multiples of the first quadratic non-residue
 are quadratic non-residues.
For this reason we define   $n_k(p)$ to be the $k$-th smallest \textit{prime} 
quadratic non-residue modulo $p$. 
A well-known 
notorious problem of Vinogradov~\cite{vino} 
regards upper bounds for $n_k(p)$:
he conjectured that $n_k(p)=O_k(p^\epsilon)$ for all $\epsilon>0$. 
 For $k=1$ this is known conditionally on the validity of the Generalized Riemann Hypothesis,
see the work of Lamzouri--Li--Soundararajan~\cite{MR3356031}.
Unconditionally, the problem   is wide open.  The best result in this direction is due to Burgess~\cite{MR93504},
who   proved deep bounds for short character sums to deduce  that $n_1(p)=O(p^\epsilon)$ holds for all $\epsilon> \frac{1}{4\sqrt \mathrm e }$.
His work was extended   to all $k$ by
Banks and Guo~\cite{MR3606952}; remarkably, they achieve a bound of the same quality for all   $k\geq 1$.
Another   related  work is due to Bourgain and  Lindenstraus~\cite[Theorem 5.1]{MR1957735}, 
who produced many prime quadratic non-residues below $p$
by  exploiting  connections to Quantum Unique Ergodicity.

In this work we investigate the average value of $n_k$. Erd\H os~\cite{MR144869}   proved that $n_1$ has   constant average;
we extend this to all $n_k$. Let $p_n $ be the $n$-th smallest prime
and $\pi(x)$ denote the number of primes $p\leq x $.
\begin{theorem}
\label{thm:main}
 Fix $ k\in \mathbb N$. Then 
\[
\lim_{x\to \infty } \frac{1}{\pi(x)} 
\sum_{\substack{\text{ prime } p \\p\leq x} }  
n_k(p)  
 = \sum_{m\in  \mathbb N, m\geq k } \frac{p_m }{2^m} \frac{(m-1)!}{(m-k)!(k-1)!}
. \] 
 Furthermore, 
\[\lim_{k\to  \infty }  
\frac{ \sum_{ m\geq k } \frac{p_m }{2^m} \frac{(m-1)!}{(m-k)!(k-1)!}
}{p_{2k} }=1
.\]
\end{theorem}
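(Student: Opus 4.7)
The plan is to interpret $n_k(p)$ probabilistically. Let $Y_j(p)=1$ if $p_j$ is a quadratic non-residue modulo $p$ and $0$ otherwise. Quadratic reciprocity makes $Y_j(p)$ a function of $p$ modulo $4p_j$, and by Siegel--Walfisz the joint distribution of $(Y_1(p),\ldots,Y_J(p))$, as $p$ runs uniformly over primes $\leq x$, converges as $x\to\infty$ to a product of $J$ independent Bernoulli$(\tfrac12)$'s for every fixed $J$. In this iid limit $n_k(p)$ is the waiting time $N_k$ for the $k$-th success, which is negative binomial with $\Pr(N_k=p_m)=\binom{m-1}{k-1}2^{-m}$; the target constant $\sum_{m\geq k}\binom{m-1}{k-1}2^{-m}p_m$ is $E[p_{N_k}]$.

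To promote this to convergence of first moments, I start from the telescoping identity
\[
n_k(p)=\sum_{j\geq 1}(p_j - p_{j-1})\,\mathbf{1}\{n_k(p)\geq p_j\},\qquad p_0:=0,
\]
which averaged over primes gives
\[
\frac{1}{\pi(x)}\sum_{p\leq x}n_k(p)=\sum_{j\geq 1}(p_j-p_{j-1})\frac{D_j(x)}{\pi(x)},\qquad D_j(x):=\#\{p\leq x : n_k(p)\geq p_j\}.
\]
The event $\{n_k(p)\geq p_j\}$ asserts that at most $k-1$ of $p_1,\ldots,p_{j-1}$ are QNR mod $p$, so by quadratic reciprocity it is a union of $\sum_{i<k}\binom{j-1}{i}$ residue classes of $p$ modulo $8p_1\cdots p_{j-1}$; for each fixed $j$, Siegel--Walfisz yields $D_j(x)/\pi(x)\to\sum_{i<k}\binom{j-1}{i}2^{-(j-1)}$, and Abel summation against the prime gaps over any finite range $j\leq J$ converges to the partial sum $\sum_{k\leq m\leq J}\binom{m-1}{k-1}2^{-m}p_m$.

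The main obstacle is the uniform tail estimate $\sum_{j>J(x)}(p_j-p_{j-1})D_j(x)/\pi(x)=o(1)$ with $J(x)\to\infty$. I plan to split it into three ranges. When $p_1\cdots p_{j-1}\leq x^{1-\delta}$, Brun--Titchmarsh applied to each residue class gives the expected-size bound $D_j(x)/\pi(x)\ll j^{k-1} 2^{-j}$, contributing $o(1)$ once summed. When $p_j\geq x^{1/(4\sqrt{\mathrm e})+\delta}$, the Banks--Guo bound $n_k(p)\ll p^{1/(4\sqrt{\mathrm e})+\delta}$ forces $D_j(x)=0$. The intermediate range, where the Chinese remainder modulus is already too large for Siegel--Walfisz but $p_j$ is still below the Burgess threshold, is the delicate part: to handle it I would estimate the high moments $\sum_{p\leq x}\bigl(\sum_{q\leq p_j,\,q\text{ prime}}\bigl(\tfrac{q}{p}\bigr)\bigr)^{2r}$, extract the diagonal via unconditional character-sum bounds (Siegel--Walfisz for short products, Bombieri--Vinogradov or zero-density estimates for longer ones), and convert this into a sub-Gaussian concentration $D_j(x)/\pi(x)\ll e^{-cj}$ sharp enough to beat the Burgess size of $n_k(p)$.

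For the second assertion, $M$ with $\Pr(M=m)=\binom{m-1}{k-1}2^{-m}$ is negative binomial with $E[M]=2k$, $\mathrm{Var}(M)=2k$, and Bernstein--Chernoff tails $\Pr(|M-2k|\geq t)\ll \exp(-c\min(t^2/k, t))$. Since PNT makes $p_n\sim n\log n$ smooth and slowly varying, Taylor-expanding $p_m$ around $m=2k$ gives, upon averaging against $M$ and using $E[M-2k]=0$ and $\mathrm{Var}(M)=2k$, a bulk contribution $p_{2k}+O(1)$ from $|m-2k|\leq k^{1/2+\epsilon}$; the tail $|m-2k|>k^{1/2+\epsilon}$ is killed by Chernoff combined with the trivial bound $p_m\ll m\log m$. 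Division by $p_{2k}\sim 2k\log(2k)$ yields the claimed ratio $\to 1$.
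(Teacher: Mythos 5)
Your overall architecture for the first moment (telescoping against prime gaps, Siegel--Walfisz for the bounded-modulus main term, Brun--Titchmarsh when $8p_1\cdots p_{j-1}\le x^{1-\delta}$, Banks--Guo to kill $D_j(x)$ once $p_j\gg x^{1/(4\sqrt{\mathrm e})+\delta}$) matches the paper's, and your treatment of the second assertion --- negative binomial concentration of $M$ around $2k$ together with $p_m\sim m\log m$ --- is a correct repackaging of the paper's binomial identity $\sum_{n\ge k}\binom{n}{k}2^{-n}=2$ plus the tail bound for $n>3k$. The problem is the intermediate range, which you correctly flag as delicate but do not actually close; your proposed fix there does not work unconditionally.

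Concretely: your Brun--Titchmarsh range ends near $p_j\asymp\log x$ (since $\log(p_1\cdots p_{j-1})\sim p_{j-1}$), and a truncated-modulus version of the same argument (conditioning only on the signs at primes up to $c\log x$, as the paper does in its Case (3)) still only yields $D_j(x)/\pi(x)\ll x^{-c/\log\log x}=x^{-o(1)}$. That suffices while $p_j\le(\log x)^{\beta}$, but once $p_j$ is larger the weight $\sum(p_j-p_{j-1})$ in the telescoped sum can be a power of $x$, so an $x^{-o(1)}$ saving is not enough. Your moment method cannot supply the missing power saving: to get $D_j(x)/\pi(x)\ll e^{-cj}$ by Chebyshev you need moments of order $r\asymp j$, whose off-diagonal terms are sums $\sum_{p\le x}\bigl(\tfrac{m}{p}\bigr)$ over non-squares $m\le p_j^{2r}=\exp(O(jp_j))$ --- conductors far beyond Siegel--Walfisz, Bombieri--Vinogradov, or any unconditional zero-density input (a Siegel zero already obstructs individual estimates at such moduli). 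Keeping $r$ fixed, the diagonal gives $D_j(x)/\pi(x)\ll_r j^{-r}$, which would in fact suffice for the tail once $r\ge 2$, but Bombieri--Vinogradov then forces $p_j^{2r}\le x^{1/2-\epsilon}$, i.e.\ $p_j\le x^{1/(4r)}\le x^{1/8}$, leaving the window $x^{1/8}<p_j<x^{1/(4\sqrt{\mathrm e})}$ uncovered. The paper's tool for all of $p_j>(\log x)^{\beta}$ is different and is the essential missing ingredient: Linnik's large sieve (Lemma~\ref{lem:largesieve}) bounds the number of $p\le x$ for which every prime $\ell\le z$ outside a fixed finite set is a residue by $\#\{n\le x^2:(n,q)=1\}/\#\{n\le x^2:(n,q)=1,\ z\text{-smooth}\}$, and Hildebrand's lower bound for smooth numbers (Lemma~\ref{lem:smooth}) with $z=(\log x)^{\beta}$ converts this into a genuine power saving $\ll x^{4/\beta+o(1)}$, which for $\beta$ large beats the Burgess-size values $p_j\ll x^{1/(4\sqrt{\mathrm e})+\delta}$. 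You need this estimate, or some substitute delivering a power of $x$ saving in that range, to complete the proof.
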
  
\begin{remark}
When combined the two statements   say that, on average,
 the $k$-th smallest prime quadratic non-residue  approximates the  $2k$-th  prime.
\end{remark}

Theorem~\ref{thm:main} will follow from a  more general statement (Theorem~\ref{thm:maingen}), which 
shows that for any function $f:\mathbb N^k \to \mathbb C$ satisfying only
a growth condition,
 $$f(n_1(p),   \ldots, n_k(p) )$$ has finite average. This allows us to  study the joint distribution  of $n_1(p)$ and $n_2(p)$ which is relevant 
to upcoming work of
Languasco and Moree
on quadratic residue bias of the divisor function.
  \begin{theorem}\label{thm:gap}
For every $z>1$ 
we have 
\[
\lim_{x\to\infty } \frac{
\#\{\text{prime } p\leq x: n_2(p) > z n_1(p)  \} 
}{\pi(x) } = \sum_{m= 1 }^\infty  2^{-n(m,z)} 
,\] where $n(m,z)$ is defined as the largest integer $n$ for which $p_n \leq z p_m$.
\end{theorem}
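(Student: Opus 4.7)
The plan is to decompose the event $\{n_2(p) > z\, n_1(p)\}$ according to the value of $n_1(p)$, compute the density of each piece by quadratic reciprocity, and then take a limit through the resulting infinite sum. The key observation is that for each $m \geq 1$, the event $\{n_1(p) = p_m,\ n_2(p) > zp_m\}$ coincides with the event that $p_m$ is a quadratic non-residue modulo $p$ while $p_i$ is a quadratic residue modulo $p$ for every other $i \in \{1, \ldots, n(m,z)\}$. Indeed, the condition $n_2(p) > z p_m = z\, n_1(p)$ precisely says that no prime quadratic non-residue lies in the interval $(p_m, z p_m]$, and by the definition of $n(m,z)$ this interval contains exactly the primes $p_{m+1}, \ldots, p_{n(m,z)}$.

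The density of each such event is obtained from quadratic reciprocity in a standard way. For any $N \geq 1$ and any pattern $(\varepsilon_1, \ldots, \varepsilon_N) \in \{\pm 1\}^N$, the conditions $\left(\tfrac{p_i}{p}\right) = \varepsilon_i$ for $1 \leq i \leq N$ are determined by the residue class of $p$ modulo $8\, p_2 p_3 \cdots p_N$, and the $2^N$ patterns partition the primitive residue classes into $2^N$ equal-sized collections. Hence, by the prime number theorem in arithmetic progressions, the primes satisfying a prescribed pattern have natural density $2^{-N}$ among all primes. Taking $N = n(m,z)$ and using the translation above yields
\[
\lim_{x\to\infty} \frac{\#\{p \leq x : n_1(p) = p_m,\ n_2(p) > zp_m\}}{\pi(x)} = 2^{-n(m,z)}.
\]

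To pass from this finite computation to the full sum, I would truncate at a parameter $M$: for any $M \geq 1$, the event $\{n_2(p) > z n_1(p)\}$ restricted to $p \leq x$ is the disjoint union of the $M$ events $\{n_1(p) = p_m,\ n_2(p) > zp_m\}$ for $1 \leq m \leq M$ together with a remainder contained in $\{n_1(p) > p_M\}$. The first $M$ terms contribute $\sum_{m=1}^M 2^{-n(m,z)}$ in the limit, while the remainder is dominated by the density of primes $p$ for which $p_1, \ldots, p_M$ are all quadratic residues, equal to $2^{-M} + o(1)$ by the density argument applied to the all-residue pattern of length $M$. Since $n(m,z) \geq m$ forces $\sum_{m > M} 2^{-n(m,z)} \leq 2^{-M}$, sending first $x \to \infty$ and then $M \to \infty$ yields the claimed identity.

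The only real difficulty lies in the density calculation via quadratic reciprocity, which is entirely standard; once it is in place, the remainder is a routine truncation argument. A minor subtlety is that the finitely many primes $p$ dividing the modulus $8\, p_2 \cdots p_N$ must be excluded when prescribing the Legendre symbol pattern, but this removes only $O(N)$ primes for each fixed $N$ and thus has no effect on natural density as $x \to \infty$.
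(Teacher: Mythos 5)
Your proposal is correct, but it proves the theorem by a genuinely different and more elementary route than the paper. The paper obtains Theorem~\ref{thm:gap} as an immediate corollary of the general Theorem~\ref{thm:maingen}, applied with $k=2$ and $f(t_1,t_2)=\mathbf{1}(zt_1<t_2)$: the limit is then $\sum_{m\geq 1}\sum_{t> m}\mathbf{1}(zp_m<p_t)2^{-t}$, whose inner sum telescopes to $2^{-n(m,z)}$. The cost of that generality is that Theorem~\ref{thm:maingen} must handle functions $f$ growing like $x^{4\sqrt{\mathrm e}-\epsilon}$, which forces the paper to control the regime where $n_k(p)$ is large via Burgess-type bounds, the Brun--Titchmarsh inequality, Linnik's large sieve and smooth-number estimates. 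Your argument exploits the fact that here $f$ is bounded, so the entire tail $m>M$ is absorbed into the event $\{n_1(p)>p_M\}$, whose density is $2^{-M}+o(1)$ by the same reciprocity-plus-Siegel--Walfisz computation (the paper's Lemma~\ref{fixlem}) used for the main terms; the double limit $x\to\infty$ then $M\to\infty$ closes the argument. Your identification of the event $\{n_1(p)=p_m,\ n_2(p)>zp_m\}$ with the Legendre-symbol pattern $\left(\frac{p_m}{p}\right)=-1$, $\left(\frac{p_i}{p}\right)=1$ for $i\in\{1,\dots,n(m,z)\}\setminus\{m\}$ is exactly right, as is the remark that the finitely many primes dividing the modulus are negligible. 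In short: your proof is self-contained and avoids all the deep input, but proves only this one theorem; the paper's heavier machinery yields Theorems~\ref{thm:main}, \ref{thm:gap} and \ref{thm:miniaverg} simultaneously from a single statement.
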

\begin{remark}\label{rem:cz}
The   inequality  $n(m,z)\geq m $ shows that  the sum     over $m$ in Theorem~\ref{thm:gap}
is rapidly convergent and it allows   a fast numerical approximation. For $z=3/2$ the first $13$ terms 
give the first $3$ correct digits. Precisely,
\[
\lim_{x\to\infty }
\frac{\#\left\{\text{prime }p\leq x : n_2(p)\leq \frac{3 }{2}n_1(p) \right\}}{\pi(x) } =0.350\ldots \ .
\]
\end{remark} The function $  M(p)=\min \{n_1(p), n_2(p)-n_1(p)\}$ plays a special r\^ole in the 
upcoming work of
Languasco and Moree.
We study its average and   largest value.

  \begin{theorem}\label{thm:miniaverg}
We have 
\[
\lim_{x\to\infty } \frac{1}{\pi(x) } 
\sum_{\substack{\text{ prime } p \\p\leq x} }  
M(p) 
= 
\sum_{m=1}^\infty 
\sum_{k=m+1}^\infty 
\frac{\min\{p_m,p_k-p_m\}}{2^k}
=2.504\ldots 
\ .\]  
\end{theorem}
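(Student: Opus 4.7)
The plan is to obtain Theorem~\ref{thm:miniaverg} as a direct application of the general averaging result Theorem~\ref{thm:maingen} with $k=2$ and test function
\[
f(a,b) = \min\{a,\,b-a\},
\]
so that $M(p) = f(n_1(p), n_2(p))$. Since $f(a,b)\leq a$ pointwise, the growth hypothesis of Theorem~\ref{thm:maingen} should be automatic from the bound already needed for $n_1(p)$ in the proof of Theorem~\ref{thm:main} (or equivalently from Erd\H{o}s' original result). Thus no new hypothesis checking is required at this level.

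The key input is then the identification of the joint limiting distribution. For fixed integers $m<k$, the event $(n_1(p), n_2(p)) = (p_m, p_k)$ says exactly that among the Legendre symbols $\bigl(\tfrac{p_j}{p}\bigr)$ for $j=1,\dots,k$, the non-residues occur at $j=m$ and $j=k$ and nowhere else. Averaged over primes $p$, these $k$ symbols behave like independent fair coin flips; this is the distributional input already used to derive Theorems~\ref{thm:main} and~\ref{thm:gap}, and it produces the mass $2^{-k}$. Inserting it into the formula delivered by Theorem~\ref{thm:maingen} yields
\[
\lim_{x\to\infty}\frac{1}{\pi(x)}\sum_{p\leq x} M(p) \;=\; \sum_{m=1}^{\infty}\sum_{k=m+1}^{\infty}\frac{\min\{p_m,\,p_k-p_m\}}{2^k},
\]
which is the identity in the statement.

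What remains is the numerical evaluation to $2.504\ldots$\,. The factor $2^{-k}$ produces exponentially fast convergence: truncating to $k\leq K_0$ for $K_0$ of size a few dozen and using a standard prime table is already enough to certify the quoted digits, with a tail bound controlled by $p_{K_0}\cdot 2^{-K_0+1}$. A minor simplification that speeds this up is to fix $m$ and split the inner sum at the threshold $k^*(m) := \min\{k : p_k - p_m \geq p_m\}$: above this threshold the minimum equals $p_m$ and the sum over $k$ becomes a geometric tail $p_m \cdot 2^{-k^*(m)+1}$, leaving only finitely many ``exceptional'' terms for each $m$.

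The main obstacle, assuming Theorem~\ref{thm:maingen} is phrased in the expected form, is not analytic but essentially notational: one must match the hypothesis of the general theorem to our $f$ and keep track of the weight $2^{-k}$ rather than some less convenient rearrangement such as $2^{-(k-1)}\cdot\tfrac12$. Beyond that, no new number-theoretic input appears to be needed; all substantive work has already been done in establishing Theorem~\ref{thm:maingen}, and the present result is a transparent specialisation.
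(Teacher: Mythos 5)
Your proposal is correct and is essentially identical to the paper's proof, which likewise obtains the result by specialising Theorem~\ref{thm:maingen} to $k=2$ and $f(t_1,t_2)=\min\{t_1,t_2-t_1\}$; the growth condition \eqref{cond:bnd} is immediate since $|f(t_1,t_2)|\leq x$ for $t_1,t_2\leq x$. The additional remarks on numerical truncation are sensible but not part of the paper's (one-line) argument.
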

\begin{remark}
By definition one always has $M(p)\geq 2 $ and it is 
 somewhat  surprising that the average of $M(p)$ is so close to its minimum. 
This may   be explained by 
combining two facts: first that for the majority of primes, $M(p)$ equals $n_1$
and, secondly, that the average of $n_1$ is  smaller than the average of $n_2-n_1$.
To see the first point   we use  the case $z=2$ of Theorem~\ref{thm:gap} together with Remark~\ref{rem:cz}       to see that 
\[
M(p)= \begin{cases}
n_1(p) ,   & \text{with probability } 0.540\ldots \ , \\
n_2(p)-n_1(p),   & \text{with probability } 0.459\ldots \ .
 \end{cases}
\] For the second point, we use   the cases $k=1$ and $k=2$ of Theorem~\ref{thm:main}.
They show that the average of $n_1 $ and $n_2-n_1 $ is  
\[ 
\sum_{  m\geq  1 } \frac{p_m }{2^m}  = 3.674\ldots
\ \ 
\text{  and }
\ \ 
-1+\sum_{ m\geq 2 } \frac{p_m }{2^m}  (m-2) 
=4.352\ldots   
\]respectively.

\end{remark}
By Theorem~\ref{thm:main} with $k=1,2$ we know that $M(p)$ has finite average, hence, by Markov's inequality 
we see that for any fixed $c>0 $ one has 
\[
\frac{\#\{\text{prime }p\leq x : M(p)> c \log p \} }{\pi(x) } \to 0, 
\ \ \textrm{ as } x\to \infty
 .\]
In other words, $M(p)> c \log p $ with $0$ probability.
Our next result shows that there are infinitely many exceptions:
\begin{theorem}
\label{thm:linik} 
Fix any $0<c<1/10$. Then the inequality 
   $$ M(p)> c \log p $$ holds for  infinitely many primes $p$.
Conditionally on the Generalized Riemann Hypothesis there exists $c_0>0$
such that        $$ M(p)> c_0 (\log p) (\log \log p )$$ holds for  infinitely many primes $p$.
\end{theorem}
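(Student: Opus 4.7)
The plan for both halves of Theorem~\ref{thm:linik} is to construct primes $p$ for which the small primes follow a prescribed quadratic pattern, and then to ask how large $p$ must be. Fix a large parameter $Y$ and let $q^\star$ be a prime in $(Y/2,\, Y/2+o(Y))$, which exists by the Prime Number Theorem. I impose on $p$ the conditions
\[
p\equiv 1 \pmod 8,\qquad \left(\tfrac{\ell}{p}\right)=+1 \text{ for each odd prime } \ell\le Y,\ \ell\ne q^\star,\qquad \left(\tfrac{q^\star}{p}\right)=-1.
\]
Since $p\equiv 1\pmod 4$, quadratic reciprocity reduces each left-hand side to $\left(\tfrac{p}{\ell}\right)$, so every such condition is a congruence on $p$ modulo $\ell$. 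By the Chinese Remainder Theorem, the system singles out a set of $\phi(Q)/2^{\pi(Y)+1}$ residue classes coprime to $Q:=8\prod_{3\le \ell\le Y}\ell$. Any prime $p$ in such a class has $n_1(p)=q^\star$ and $n_2(p)>Y$, hence $M(p)\ge \min\{q^\star,\,Y-q^\star\}= Y/2- o(Y)$.

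For the unconditional half, I invoke Linnik's theorem with constant $L\le 5$ (Xylouris) to produce a prime $p$ in such a valid residue class with $p\ll Q^L$. Chebyshev's bound $\log Q\sim Y$ then gives $\log p\le LY(1+o(1))$, so
\[
M(p)\,\ge\, Y/2-o(Y)\,\ge\,\frac{1-o(1)}{2L}\log p,
\]
which exceeds $c\log p$ for any fixed $c<1/10$ once $Y$ is large. Letting $Y\to\infty$ yields infinitely many such $p$ because $p>q^\star\to\infty$.

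For the conditional half, I count primes $p\le P$ in the valid classes via the expansion
\[
\#\{p\le P:\text{conditions hold}\}=\frac{1}{2^{\pi(Y)}}\sum_{d\mid D}\Big(\sum_{p\le P}\chi_p(d)-\sum_{p\le P}\chi_p(dq^\star)\Big),
\]
where $D=\prod_{\ell\le Y,\,\ell\ne q^\star}\ell$ and $\chi_p(n)=(n/p)$. For $p\equiv 1\pmod 4$, reciprocity turns each inner sum with $m\in\{d,dq^\star\}$, $m>1$, into $\sum_{p\le P}(p/m)$, i.e.\ a non-principal Dirichlet character summed over primes. Under GRH each such sum is $O(\sqrt P\,\log^2(mP))$, while the $d=1$ term contributes the main $\pi(P)$. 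Summing over the $\le 2^{\pi(Y)}$ characters and dividing by $2^{\pi(Y)}$ gives
\[
\#\{p\le P:\text{conditions hold}\}=\frac{\pi(P)}{2^{\pi(Y)}}+O\!\left(\sqrt P\,(\log P\log\log P)^2\right).
\]
The main term dominates provided $\pi(Y)\log 2\le \tfrac12 \log P-O(\log\log P)$, and by $\pi(Y)\sim Y/\log Y$ this permits $Y=c_1\log P\log\log P$ for any fixed $c_1<1/(2\log 2)$. One thereby obtains infinitely many primes $p$ with $M(p)\ge Y/2\ge c_0(\log p)(\log\log p)$ with $c_0=c_1/2$.

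The conceptual content of both halves is classical; the obstacle is purely quantitative. One must track the reciprocity signs carefully (the normalisation $p\equiv 1\pmod 8$ simplifies this and also handles the prime $\ell=2$), verify that the $2^{\pi(Y)}$ exceptional character sums do not swamp the main term under GRH, and pin down the Linnik exponent precisely enough to recover the stated constant $1/10$.
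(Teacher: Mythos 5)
Your proof is correct and follows essentially the same route as the paper: the unconditional bound via Xylouris's Linnik exponent $5$ applied to a residue class forcing $n_1(p)\approx Y/2$ and $n_2(p)>Y$, and the conditional bound via the Montgomery-style character-sum expansion under GRH with $Y\asymp(\log P)(\log\log P)$. The only quibbles are cosmetic: the asymptotic $\log Q\sim Y$ is the Prime Number Theorem rather than Chebyshev (Chebyshev's constants alone would degrade the final $1/10$), and the exact power of $2$ in your expansion together with the detection of $p\equiv 1\ (\mathrm{mod}\ 8)$ by characters needs one line of bookkeeping.
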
To prove the conditional bound we follow the proof of Montgomery~\cite[Theorem 13.5]{MR0337847} quite closely; his proof 
regards the inequality    
$n_1(p)\geq c_0  (\log p) (\log \log p )$; our results includes his.

Although we shall not prove it, 
the logarithm lower bounds are tight under GRH: 
Ankeny~\cite{MR45159}
showed that $n_1(p)=O((\log p)^2 ) $ under GRH
and his proof can be modified     to show  $n_k(p)=O((\log p)^2 ) $ for all fixed $k$.
This would show that under GRH one has 
  $$M(p)= O((\log p)^2 ) .$$\textbf{Acknowledgements.} I would like to thank P. Moree for sending me his preprint
with A. Languasco 
on quadratic residue bias of the divisor function.
I would also like to thank him for 
 making me aware of the questions on the size of $n_k(p)$
without which this investigation would not have started.

\section{Preparatory lemmas}
\begin{lemma}
[Using Siegel--Walfisz's theorem]
\label{fixlem}
Assume that for each prime $p_k$ we are given $\epsilon_k \in \{1,-1\}$. 
Fix any constant $A>0$ and $n\in \mathbb N. $
Then for all $x\geq 2$ we have 
\[\#\left\{\textrm{prime } p\leq x: \forall 1\leq k \leq n \Rightarrow \left(\frac{p_k}{p}\right)=\epsilon_k  \right\} =\frac{\pi(x)}{2^n}  
+   O_A\left( 
\frac{   p_1 \cdots p_n x}{(\log x )^{ A}}
\right),\] where the implied constant depends at most on $A $.
\end{lemma}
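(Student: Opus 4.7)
The plan is to convert the prescribed Legendre symbols into congruence conditions via quadratic reciprocity, and then count primes in the resulting arithmetic progressions using Siegel--Walfisz.

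First I would rewrite each condition $\left(\frac{p_k}{p}\right)=\epsilon_k$ as membership of $p$ in a union of residue classes modulo a small modulus. For an odd prime $p_k$, quadratic reciprocity gives $\left(\frac{p_k}{p}\right)=\pm\left(\frac{p}{p_k}\right)$ with sign determined by $p\bmod 4$, so the condition translates into $p$ belonging to a prescribed set of reduced residue classes modulo $4p_k$; and for $p_k=2$ (which can only occur as $p_1$) the value of $\left(\frac{2}{p}\right)$ is determined by $p\bmod 8$. Combining these constraints via the Chinese Remainder Theorem, the simultaneous system $\left(\frac{p_k}{p}\right)=\epsilon_k$ for $1\le k\le n$ is equivalent to $p$ lying in a set $\mathcal{R}$ of reduced residue classes modulo $Q:=8\prod_{p_k\text{ odd}}p_k$. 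A standard counting argument (exactly half of the reduced classes mod $p_k$ yield each value of the Legendre symbol, and likewise for the mod-$8$ condition) shows that $|\mathcal{R}|=\phi(Q)/2^{n}$.

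Next I would invoke the Siegel--Walfisz theorem in the form: for any $B>0$, uniformly in moduli $Q\le(\log x)^{B}$ and in residue classes $a$ coprime to $Q$,
\[
\pi(x;Q,a)=\frac{\pi(x)}{\phi(Q)}+O_{B}\!\left(\frac{x}{(\log x)^{B}}\right).
\]
Summing this identity over the $\phi(Q)/2^{n}$ classes of $\mathcal{R}$ produces the main term $\pi(x)/2^{n}$ together with a cumulative error bounded by $O_{B}\bigl(\phi(Q)\,2^{-n}\,x(\log x)^{-B}\bigr)$. Since $\phi(Q)/2^{n}\ll p_{1}p_{2}\cdots p_{n}$ with an absolute implied constant, specialising $B=A$ delivers exactly the error term claimed in the lemma.

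The one caveat is that Siegel--Walfisz is only effective in the range $Q\le(\log x)^{B}$, i.e.\ when $p_{1}\cdots p_{n}$ is bounded by a fixed power of $\log x$. In the complementary regime the asserted error $p_{1}\cdots p_{n}\,x(\log x)^{-A}$ already dominates $\pi(x)$, so the bound holds trivially by estimating the left-hand side by $\pi(x)$. The only delicate task in executing the plan is the case analysis in the reciprocity bookkeeping when $p_{1}=2$ is present among the $p_{k}$, but this is a routine calculation and I do not anticipate any genuine obstacle.
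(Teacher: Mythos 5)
Your proposal is correct and follows essentially the same route as the paper: quadratic reciprocity converts the $n$ Legendre-symbol conditions into membership in $\phi(q)/2^n$ reduced residue classes modulo $q=8\prod_{j\ge 2}p_j$, and Siegel--Walfisz then gives the count, with the error bounded via $\phi(q)/2^n\le p_1\cdots p_n$. The one place you are more explicit than the paper is in observing that when $q$ exceeds the Siegel--Walfisz range the claimed error already dominates $\pi(x)$ so the estimate is trivial; the paper applies the theorem directly without spelling this out, but your extra care is harmless and arguably cleaner.
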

\begin{proof} 
Let $q=8 \prod_{j=2}^ np_j$. We will show that there exists $\mathcal S\subset (\mathbb Z/q\mathbb Z)^*$ of cardinality $\phi(q) 2^{-n }$,
such that  a   prime $p>p_n$ satisfies the $\epsilon_k$-conditions 
if and only if 
$p\md q \in \mathcal{S}$.  
If $p\equiv 1 \md 4 $ then by quadratic reciprocity the conditions are equivalent to 
  \[
\left(\frac{2}{p} \right) =\epsilon_1,
1<i\leq n \Rightarrow 
\left(\frac{p}{p_i} \right)=\epsilon_i.\] Clearly the solubility for $p$ is periodic modulo $ q$.
  Furthermore, there   are exactly exactly $\prod_{j=2}^n  \frac{p_j-1}{2} $ such solutions $\md q $,
since for each odd prime $p_i$ exactly half elements of $\mathbb F_{p_i}^*$ are squares.
In the remaining case $p\equiv 3 \md 4 $ one ends up with the conditions 
  \[
\left(\frac{2}{p} \right) =\epsilon_1,
1<i\leq n \Rightarrow 
\left(\frac{p}{p_i} \right)=(-1)^{\frac{p_i-1}{2}}\epsilon_i
\] and the same considerations apply. The total number of solutions in $(\mathbb Z/q\mathbb Z)^*$ is 
$$ 2 \prod_{j=2}^n  \frac{p_j-1}{2} =\phi(q)2^{-n} .$$ 
Therefore, 
\begin{equation}
\label{eq:frgtyh}
\#\left\{\textrm{prime } p\leq x: \forall 1\leq k \leq n \Rightarrow \left(\frac{p_k}{p}\right)=\epsilon_k \right\}
=\sum_{t\in \mathcal S }
\#\{p\leq x : p\equiv t \md {q } \} .\end{equation}
Using the Siegel--Walfisz in the form~\cite[Eq. (5.77)]{MR2061214}
 we obtain 
\[\sum_{t\in \mathcal{ S}  }\left(\frac{\pi(x) }{\phi(q )}+O_A\left(\frac{x}{(\log x )^{ A}}\right)\right)\]
and the proof concludes by using $ \#\mathcal{S}=\phi(q)2^{-n} $.  \end{proof}

\begin{lemma}
[Using Brun--Titchmarsch's inequality]
\label{lem:brun}
Assume that for each prime $p_k$ we are given $\epsilon_k \in \{1,-1\}$. 
Then for all $x\geq 2$ and $n\in \mathbb N$ with $  p_1\ldots p_n \leq x^{9/10}$
  we have 
\[\#\left\{\textrm{prime } 
p\leq x: \forall 1\leq k \leq n \Rightarrow \left(\frac{p_k}{p}\right)=\epsilon_k \right\} =O\left(\frac{\pi(x)}{2^n}  \right)
,\] where the implied constant is absolute.
\end{lemma}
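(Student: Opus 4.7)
The plan is to recycle the residue-class decomposition from the proof of Lemma~\ref{fixlem} and then apply Brun--Titchmarsh in place of Siegel--Walfisz, so that we no longer pay the $x/(\log x)^A$ error term (which is worthless when $p_1\cdots p_n$ is comparable to $x^{9/10}$). Since we only want an upper bound up to an absolute constant, we can afford the weaker pointwise estimate Brun--Titchmarsh provides on each residue class, rather than the genuine asymptotic.

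Concretely, I would first argue exactly as in the proof of Lemma~\ref{fixlem} that, with $q=8\prod_{j=2}^n p_j$, there is a set $\mathcal{S}\subset(\mathbb{Z}/q\mathbb{Z})^*$ with $\#\mathcal{S}=\phi(q)2^{-n}$ such that, up to the finitely many primes $p\leq p_n$, the condition $(p_k/p)=\epsilon_k$ for all $k\leq n$ is equivalent to $p\bmod q\in\mathcal{S}$. Thus the quantity we want to bound equals
\[
\sum_{t\in\mathcal{S}}\#\{p\leq x:p\equiv t\pmod q\}+O(n),
\]
and the $O(n)$ term is harmless (either $n$ is small and $\pi(x)/2^n\gg n$ for $x$ large, or $x$ is so small that the claimed bound is trivial with an absolute constant).

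Next I would apply the Brun--Titchmarsh inequality to each residue class. The hypothesis $p_1\cdots p_n\leq x^{9/10}$ gives $q\leq 8x^{9/10}$, so $\log(x/q)\geq \tfrac{1}{10}\log x-\log 8\gg\log x$ for all $x$ beyond an absolute threshold (and small $x$ is, as above, trivial). Brun--Titchmarsh then yields
\[
\#\{p\leq x:p\equiv t\pmod q\}\ll\frac{x}{\phi(q)\log(x/q)}\ll\frac{x}{\phi(q)\log x}
\]
with an absolute implied constant. Summing over the $\#\mathcal{S}=\phi(q)2^{-n}$ admissible classes gives
\[
\sum_{t\in\mathcal{S}}\#\{p\leq x:p\equiv t\pmod q\}\ll\frac{1}{2^n}\cdot\frac{x}{\log x}\ll\frac{\pi(x)}{2^n},
\]
which is exactly the claim.

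There is really no deep obstacle here; the only thing to be careful about is that Brun--Titchmarsh requires $q<x$ with a nontrivial gap, and this is precisely what the exponent $9/10$ in the hypothesis guarantees (any exponent strictly less than $1$ would do, with the implied constant deteriorating as the exponent approaches $1$). I would also be mindful to state the estimate in a form where the implied constant is genuinely absolute, which is why I avoid the Siegel--Walfisz main term entirely and merely bound each class crudely.
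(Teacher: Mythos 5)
Your proof is correct and follows essentially the same route as the paper's: reuse the residue-class decomposition modulo $q=8\prod_{j=2}^n p_j$ from Lemma~\ref{fixlem} and apply the Brun--Titchmarsh inequality to each of the $\phi(q)2^{-n}$ admissible classes, using the hypothesis on $p_1\cdots p_n$ to keep $q$ a fixed power below $x$. The only difference is that you spell out the $\log(x/q)\gg\log x$ step and the $O(n)$ correction for small primes, which the paper leaves implicit.
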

\begin{proof}  Let Let $q=8 \prod_{j=2}^ np_j$. 
The condition 
$  p_1\ldots p_n \leq x^{9/10}$
ensures that $q \leq x^{99/100}$, hence, injecting~\cite[Eq. (6.95)]{MR2061214} 
into~\eqref{eq:frgtyh}
yields  
\[
\ll
\sum_{t\in\mathcal S }\frac{\pi(x) }{\phi(q )},
 \] with an absolute implied constant.
The proof concludes by using that $ \#\mathcal S=\phi(q)2^{-n} $. 
  \end{proof}

\begin{lemma}
[Using Linnik's large sieve]
\label{lem:largesieve}
Let $  x  \geq z \geq 2  $ and $ q \in \mathbb N$.  
Then  the number of primes $p\leq x $ for which $ \left(\frac{\ell}{p}\right)=1$ for all primes $\ell$ except those dividing $q$
is 
\[ 
\ll
\frac{\#\{1\leq n \leq x^2: \gcd(n,q)=1 \} }{ \#\{1\leq n \leq x^2: \gcd(n,q)=1, p\mid n\Rightarrow p\leq z  \}}
 ,\] where the implied constant is absolute.
\end{lemma}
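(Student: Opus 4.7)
My plan is to apply Linnik's large sieve inequality for the Legendre symbol to the indicator function of the $z$-smooth subset of $A=\{1\leq n\leq x^2:\gcd(n,q)=1\}$. Denote by $\mathcal P$ the set of primes $p\leq x$ in the statement, and by $T\subset A$ the denominator set.

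The key observation is that for every $p\in \mathcal P$ with $p>z$, multiplicativity of the Legendre symbol together with the hypothesis forces $\left(\tfrac{n}{p}\right)=1$ for every $n\in T$: such an $n$ factors into primes $\ell\leq z$ with $\ell\nmid q$, each a quadratic residue modulo $p$ by hypothesis, and none equal to $p$. Consequently $\sum_{n\in T}\left(\tfrac{n}{p}\right)=|T|$ for every such prime.

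The large sieve inequality
\[
\sum_{p\leq x}\left|\sum_{n\leq x^2} a_n\left(\tfrac{n}{p}\right)\right|^2 \ll x^2 \sum_n |a_n|^2,
\]
applied with $a_n=\mathbf 1_T(n)$, combined with the previous observation, yields $\#\mathcal P\cdot |T|^2\ll x^2 |T|$ after restricting to $p\in \mathcal P\cap (z,x]$ and absorbing the $\pi(z)$ contribution from smaller primes. This already gives the intermediate bound $\#\mathcal P\ll x^2/|T|$.

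The remaining sharpening of the numerator from $x^2$ to $|A|\asymp x^2\phi(q)/q$ is where I expect the main technical work to lie. I would achieve this by applying the large sieve in a form adapted to integers coprime to $q$: via the Möbius identity $\mathbf 1(\gcd(n,q)=1)=\sum_{d\mid\gcd(n,q)}\mu(d)$ one can split $\mathbf 1_T$ according to $d\mid q$, invoke the large sieve on each resulting sub-interval $[1,x^2/d]$, and reassemble with weights $\mu(d)$ to produce the factor $x^2\sum_{d\mid q}\mu(d)/d=x^2\phi(q)/q\asymp |A|$ in place of $x^2$. Primes $p\mid q$ contribute at most $\omega(q)\ll\log q$ exceptions, absorbed into the implied constant.
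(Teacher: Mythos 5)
Your argument is at bottom the same as the paper's: both are Linnik's large--sieve argument applied to the set $T$ of $z$-smooth integers $n\le x^2$ coprime to $q$. The paper runs it through the arithmetic form of the sieve (as in Iwaniec--Kowalski, Theorem 7.16): one sieves $\mathcal M=\{n\le x^2:\gcd(n,q)=1\}$ by the quadratic non-residue classes modulo each prime $p$ of the exceptional set $\mathcal P$, concludes that the survivors number $\ll \#\mathcal M/\#\mathcal P$, and notes that $T$ consists entirely of survivors. You use the dual character-sum form. These are interchangeable here, so I would not call this a different route; but two of your steps have genuine gaps.

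First, the primes $p\le z$ of $\mathcal P$ cannot be ``absorbed'': when $z$ is comparable to $x$ one has $\pi(z)\asymp x/\log x$, while the bound being proved can be $O(1)$ (take $q=1$ and $z=x$, where $\#T=\Psi(x^2,x)\asymp x^2$ and so $\#\mathcal M/\#T\asymp 1$). So $\pi(z)\ll\#\mathcal M/\#T$ is false in general, and the small primes must be kept inside the sieve. In the arithmetic form this is painless: one allows the class $0\bmod p$ together with the squares, which still removes $(p-1)/2$ classes and keeps the sieve sum $\gg\#\mathcal P$. In your character-sum form you would instead need $\sum_{n\in T}\left(\frac{n}{p}\right)\gg \#T$ for $p\le z$ as well, i.e.\ that a positive proportion of $T$ is not divisible by $p$ --- plausible but requiring an argument, not a remark. (To be fair, the paper's own justification ``$\left(\frac{n}{p}\right)=1$ by multiplicativity'' also silently ignores the case $p\mid n$.) Second, the M\"obius reassembly you propose in order to replace $x^2$ by $\#\mathcal M$ does not work as sketched: the large sieve over $m\le x^2/d$ with moduli up to $Q=x$ still yields $x^2/d+Q^2\ge x^2$, so nothing is gained as $d$ grows, and one cannot recombine quadratic forms with the signs $\mu(d)$. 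The standard fix is again on the sieve side: include each prime $p'\mid q$ with $p'\le x$ as a sieving modulus by removing the class $0\bmod {p'}$, which multiplies the sieve sum by $\asymp q/\phi(q)$ and produces the numerator $\#\mathcal M$. (For the paper's application this refinement is never used --- there the numerator is immediately bounded by $x^2$ --- but it is part of the statement you are proving.)
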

\begin{proof} 
This is a variation of the proof given in~\cite[Th. 7.16]{MR2061214}. We let 
\[\mathcal M   = \left\{1\leq n \leq x^2: \gcd(n,q)=1\right \} , 
\mathcal P   =\left \{p\leq x: \ell\leq z, \ell\nmid  q \Rightarrow \left(\frac{\ell}{p} \right)=1   \right\},
Q=x
,\] where $\ell $ denotes a prime.
Then the argument in~\cite[Th. 7.16]{MR2061214} works in our setting and yields the upper bound $\ll \#\mathcal M/\#\mathcal P$
for the number of $n\in \mathcal M$ that are quadratic  residues modulo every    $p\in \mathcal P$.
To conclude the proof note that if an integer $n\leq x ^2$ is coprime to $q$ and has all its prime divisors in the interval $[1,z]$,
then for every $p\in \mathcal P$ one has  $(\frac{n}{p})=1$ by the multiplicativity of the quadratic symbol.  
 \end{proof}

\begin{lemma}
[Smooth numbers; Hildebrand~\cite{MR831874}]
\label{lem:smooth}
Fix any   $\beta>1$ and $k\in \mathbb N$.
Then for all $x\geq 2 $ and 
all $q\in \mathbb N$ with at most $k$ distinct prime divisors    we have  
\[\#\left\{n\leq x : \gcd(n,q)=1, 
p\mid n \Rightarrow p\leq (\log x )^\beta \right\}
\gg 
x^{1-\frac{2}{\beta }  }, \] where the implied constant depends at most on    $\beta$   and $k$.
\end{lemma}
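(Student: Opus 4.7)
Write $y=(\log x)^{\beta}$ and denote by $\mathcal A(x)$ the quantity we wish to bound from below. The key observation is that every $y$-smooth positive integer $n\leq x$ factorises uniquely as $n=d\,m$ where every prime divisor of $d$ divides $q$ and $\gcd(m,q)=1$; both factors are automatically $y$-smooth, and $m\leq x/d$. Summing over the $q$-part $d$ gives
\[
\Psi(x,y)=\sum_{\substack{d\leq x\\ p\mid d\,\Rightarrow\, p\mid q}}\mathcal A(x/d)\;\leq\;\mathcal A(x)\cdot N_q(x),
\]
where $\Psi(x,y)$ is the total number of $y$-smooth integers up to $x$ and $N_q(x):=\#\{d\leq x: p\mid d\Rightarrow p\mid q\}$.

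First I would bound $N_q(x)$. Since $q$ has at most $k$ distinct prime divisors and every prime is at least $2$, each exponent in a factorisation $d=\prod p_i^{a_i}\leq x$ supported on primes dividing $q$ satisfies $a_i\leq \log x/\log 2$, and enumerating the exponent vectors yields $N_q(x)\leq (1+\log_2 x)^{k}\ll_k(\log x)^{k}$.

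Next I would apply Hildebrand's theorem on smooth numbers~\cite{MR831874}, which for $y=(\log x)^{\beta}$ with $\beta>1$ fixed gives the asymptotic
\[
\Psi\bigl(x,(\log x)^{\beta}\bigr)=x^{\,1-1/\beta+o(1)}\qquad(x\to\infty).
\]
Consequently, for any fixed $\delta>0$ there is the lower bound $\Psi(x,y)\gg_{\beta,\delta}x^{\,1-1/\beta-\delta}$ valid for all $x\geq 2$ (with bounded values of $x$ absorbed into the implied constant). Specialising to $\delta=1/(2\beta)$ and dividing through by the estimate for $N_q(x)$ gives
\[
\mathcal A(x)\;\gg_{\beta,k}\;\frac{x^{\,1-1/\beta-\delta}}{(\log x)^{k}}\;=\;\frac{x^{\,1-3/(2\beta)}}{(\log x)^{k}}\;\gg\;x^{\,1-2/\beta},
\]
the final inequality being valid once $x$ exceeds a threshold $x_{0}(\beta,k)$ and trivially valid for $x\leq x_{0}$ (there $\mathcal A(x)\geq 1$ while $x^{\,1-2/\beta}$ is bounded in terms of $\beta$).

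The only analytically delicate step is the appeal to Hildebrand: the lower bound $\Psi(x,(\log x)^{\beta})\gg x^{\,1-1/\beta-\delta}$ must cover the whole range $\beta>1$. For moderately large $\beta$ it is immediate from the Canfield--Erd\H os--Pomerance asymptotic $\Psi(x,y)=x\rho(u)(1+o(1))$ with $u=\log x/\log y$, together with $\rho(u)=u^{-u(1+o(1))}$. For $\beta$ close to $1$ the classical range is insufficient and one must appeal to the refined estimates of Hildebrand (and Hildebrand--Tenenbaum), which extend the saddle-point evaluation of $\Psi(x,y)$ into this regime and still yield the required exponent $1-1/\beta+o(1)$. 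The gap of $1/\beta$ between what Hildebrand's theorem supplies and the exponent $1-2/\beta$ claimed in the lemma is precisely the slack that lets us simultaneously absorb the $o(1)$ error in the smooth-number asymptotic and the polylogarithmic loss $(\log x)^{k}$ coming from the coprimality sieve.
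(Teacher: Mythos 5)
Your argument is correct, and it handles the coprimality condition by a genuinely different device than the paper. Both proofs rest on the same key input, Hildebrand's estimate $\Psi(x,(\log x)^{\beta})=x^{1-1/\beta+o(1)}$, and both exploit the hypothesis that $q$ has at most $k$ distinct prime factors to show that imposing $\gcd(n,q)=1$ costs only a harmless factor. The paper removes the condition $\gcd(n,q)=1$ by M\"obius inversion over the squarefree kernel $Q$ of $q$, writing the count as $\sum_{d\mid Q}\mu(d)\,\Psi(x/d,(\log x)^{\beta})$ and then estimating the alternating sum; this requires the observation $d\leq Q\leq(\log x)^{k\beta}$ so that each $x/d$ stays in range, and some care because the $o(1)$ exponents vary from term to term. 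You instead use the unique factorisation $n=dm$ into a $q$-part and a part coprime to $q$, which gives the one-sided inequality $\Psi(x,y)\leq\mathcal A(x)\cdot N_q(x)$ with $N_q(x)\ll_k(\log x)^{k}$; this avoids the sign issues entirely and is arguably more robust, at the cost of being purely an inequality rather than an identity one could refine to an asymptotic. Your bookkeeping is sound: the slack between $1-1/\beta$ and $1-2/\beta$ comfortably absorbs both the $o(1)$ in Hildebrand's exponent (taking $\delta=1/(2\beta)$) and the $(\log x)^{k}$ loss, and you correctly dispose of bounded $x$ by noting $\mathcal A(x)\geq 1$. Your closing caveat about the range of validity of the smooth-number asymptotic is well taken --- for $y=(\log x)^{\beta}$ one is below the range where $\Psi(x,y)\sim x\rho(u)$ is known, but the logarithmic-order statement $\Psi(x,y)=x^{1-1/\beta+o(1)}$ does hold there, and that is all either proof uses.
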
 
\begin{proof} The special 
case $q=1$ is due to Hildebrand~\cite{MR831874} and one can   deduce from this the general  case as follows: 
  letting $Q$ be the square-free number composed of all prime divisors of $q$ that   are $\leq (\log x)^\beta$
we can write the quantity in the lemma as 
\[\sum_{d\mid Q}\mu(d) \#\{1\leq n \leq x/d:p\mid n \Rightarrow p\leq (\log x )^\beta  \}
. \] Note that $x/(\log x )^\beta < x/d\leq x $ since $d\leq Q\leq (\log x )^{k\beta}$. Hence, we obtain 
\[\sum_{d\mid Q}\mu(d) \left(\frac{x}{d}\right)^{1-1/\beta+o(1) }=x^{1-1/\beta+o(1) } \prod_{p\mid q, p\leq (\log x )^\beta} (1-p^{-1+1/\beta+o(1) }),\]
which is $ \geq x^{1-2/\beta} 2^{-k}
\gg_k x^{1-2/\beta}. $
This  is sufficient. \end{proof}

 \begin{lemma}[Bounding  $n_1(p)$; Burgess~\cite{MR93504}]
\label{lem:burgh}  
For any fixed   constant $c>\frac{1}{4\sqrt \mathrm e}$
we have   
$n_1(p)=O(p^c) $.\end{lemma}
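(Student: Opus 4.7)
The plan is to derive a contradiction from the assumption $n_1(p)>p^c$, by comparing a lower bound for a short character sum against Burgess's upper bound for the same sum. Let $\chi$ denote the Legendre symbol modulo $p$. If $n_1(p)>p^c$, then every prime $q\leq p^c$ satisfies $\chi(q)=1$, and by complete multiplicativity $\chi(n)=1$ for every $p^c$-smooth positive integer $n$.

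I would then pick $N=p^{\alpha}$ with $\alpha$ just larger than $(r+1)/(4r)$ for a large fixed integer $r$, split
$$\sum_{n\leq N}\chi(n)=\Psi(N,p^c)+\sum_{\substack{n\leq N\\ n\text{ not }p^c\text{-smooth}}}\chi(n),$$
and bound the contribution from non-smooth $n$ trivially by $|\chi|\leq 1$ to get the pointwise inequality
$$\sum_{n\leq N}\chi(n)\geq 2\Psi(N,p^c)-N.$$
On the other hand, Burgess's short character sum estimate yields
$$\Bigl|\sum_{n\leq N}\chi(n)\Bigr|\ll_{r,\varepsilon} N^{1-1/r}\,p^{(r+1)/(4r^2)+\varepsilon}=o(N)$$
for our choice of $\alpha$, while the Dickman--Hildebrand asymptotic gives $\Psi(N,p^c)=(1+o(1))N\rho(\alpha/c)$, where $\rho$ is Dickman's function. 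Combining these two bounds forces $\rho(\alpha/c)\leq 1/2+o(1)$.

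To extract the announced threshold, let $r\to\infty$ and $\alpha\to(1/4)^+$, so that $u:=\alpha/c\to 1/(4c)$. One may assume $c\leq 1/4$ (else the conclusion is trivial), so $u\in[1,2]$, a range in which the explicit formula $\rho(u)=1-\log u$ holds. Since $c>1/(4\sqrt{\mathrm e})$ one has $u<\sqrt{\mathrm e}$, and therefore $\rho(u)>1-\log\sqrt{\mathrm e}=1/2$, contradicting the upper bound extracted from Burgess. This is exactly where the otherwise mysterious constant $\sqrt{\mathrm e}$ enters: it is the unique solution of $\rho(u)=1/2$ in $[1,2]$.

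The main obstacle is Burgess's bound itself, whose proof proceeds via a $2r$-th moment argument reduced through the Weil bound for complete character sums modulo $p$; the remainder of the argument is elementary once one has a workable asymptotic for $\Psi(N,y)$ in the appropriate range (a quantitative Hildebrand-type estimate would suffice and is stronger than what was needed in Lemma~\ref{lem:smooth}). A secondary technical point is to verify that Burgess's inequality applies uniformly for the continuous family of $N$ and $r$ needed in the limiting step; this is standard and follows from the usual formulation allowing arbitrary intervals $[M,M+N]$.
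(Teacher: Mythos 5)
The paper does not prove this lemma at all: it is imported verbatim from Burgess's 1957 paper (and Lemma~\ref{lem:banksguo} likewise from Banks--Guo), so there is no internal argument to compare yours against. Your sketch is the correct standard deduction: Vinogradov's smooth-number trick combined with the Burgess character-sum inequality, with the Dickman--de Bruijn asymptotic $\Psi(N,N^{1/u})\sim\rho(u)N$ and the explicit formula $\rho(u)=1-\log u$ on $[1,2]$ pinning down the threshold $\rho(u)=1/2$ at $u=\sqrt{\mathrm e}$, whence the exponent $1/(4\sqrt{\mathrm e})$. The inequality $\sum_{n\leq N}\chi(n)\geq 2\Psi(N,p^c)-N$, the choice $N=p^{\alpha}$ with $\alpha>(r+1)/(4r)$, and the limiting step $\alpha\to(1/4)^{+}$ are all as in the classical argument (one can avoid the limit by fixing $\alpha$ close enough to $1/4$ that $\rho(\alpha/c)>1/2$ still holds, by continuity of $\rho$). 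The one caveat, which you acknowledge yourself, is that the entire depth of the lemma resides in Burgess's bound $|\sum_{n\leq N}\chi(n)|\ll N^{1-1/r}p^{(r+1)/(4r^2)+\varepsilon}$, which your proposal assumes rather than proves; so in substance you have replaced one black box (the lemma) by a smaller one (the character-sum estimate), together with a correct and complete account of how the constant $4\sqrt{\mathrm e}$ arises --- which is genuinely useful context for the paper, since that constant reappears in hypothesis~\eqref{cond:bnd} of Theorem~\ref{thm:maingen}.
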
 

 \begin{lemma}[Burgess bound for $n_k(p)$; Banks--Guo~\cite{MR3606952}]
\label{lem:banksguo}  
For any fixed  $k\geq 2 $ and any fixed  constant $c>\frac{1}{4\sqrt \mathrm e}$
we have   
$n_k(p)=O_k(p^c) $.\end{lemma}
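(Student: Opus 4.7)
The plan is to extend the Burgess argument underlying Lemma~\ref{lem:burgh} to force the existence of $k$ distinct prime quadratic non-residues below $p^c$. Suppose, for contradiction, that $n_k(p) > y := p^c$ for some fixed $c > \frac{1}{4\sqrt{\mathrm e}}$; then at most $k-1$ primes $q_1,\ldots,q_{k-1}\leq y$ are quadratic non-residues modulo $p$, and every prime $\ell\leq y$ outside this list satisfies $\left(\frac{\ell}{p}\right)=+1$.

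I would then consider the $y$-smooth integers $n\leq X$ coprime to $q_1\cdots q_{k-1}$, for $X$ a small power of $p$ slightly exceeding $y$. For every such $n$ one has $\left(\frac{n}{p}\right)=+1$ by multiplicativity, while Lemma~\ref{lem:smooth} applied with $q=q_1\cdots q_{k-1}$ (which has $\leq k-1$ prime factors) yields $\gg_k X^{1-2/\beta}$ such integers whenever $y\geq(\log X)^\beta$. Combined with a Rankin-type control over the contribution of the remaining non-smooth $n$ (those values for which the character sign cannot be determined only from knowledge of the $q_i$), this produces the lower bound
\[
\sum_{n\leq X}\left(\frac{n}{p}\right) \;\gg_k\; X^{1-o(1)}.
\]
On the other hand, the Burgess short character sum estimate furnishes an upper bound of the form $\ll X\,p^{-\delta}$ for some $\delta>0$ whenever $X$ is a sufficiently small power of $p$ above $p^{1/(4\sqrt{\mathrm e})}$. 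Comparing the two bounds gives a contradiction.

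The main obstacle is calibrating the parameters so that the smooth-number lower bound truly dominates the Burgess upper bound uniformly in $p$ once $k$ is fixed. Lemma~\ref{lem:smooth} forces $y\geq(\log X)^\beta$, which is only marginally consistent with $y=p^c$ for $X$ a small power of $p$; one must therefore use a slightly more refined smooth-number estimate, or (as in Banks--Guo) replace the bare Burgess bound with its shifted version and exploit the multiplicative decomposition $n=\bigl(\prod q_i^{a_i}\bigr)\cdot m$ directly. Since $k$ is fixed in all the applications we need, I would follow the Banks--Guo parameter choices verbatim and only verify that the $k$-dependence in the implied constant is harmless.
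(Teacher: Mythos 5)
The paper does not prove this lemma: it is imported wholesale from Banks--Guo~\cite{MR3606952}, so there is no internal proof for your sketch to be measured against. Evaluated on its own terms, though, the sketch has a genuine gap at its central step, and the parameter issues you flag are real but are not the main problem.

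The fatal step is the inference ``this produces the lower bound $\sum_{n\leq X}\left(\frac{n}{p}\right)\gg_k X^{1-o(1)}$.'' Having $\gg_k X^{1-2/\beta}$ smooth integers $n\leq X$ coprime to $q_1\cdots q_{k-1}$ with $\left(\frac{n}{p}\right)=+1$ gives a lower bound on the \emph{count of residues}, not on the \emph{signed character sum}: the remaining $\gg X$ integers up to $X$ (those that are not $y$-smooth, or divisible by some $q_i$ to odd order) have unconstrained sign and could all be non-residues, leaving the character sum as negative as $-X(1+o(1))$. In the Vinogradov--Burgess argument the logic runs the other way around: Burgess shows the character sum is $o(X)$, whence residues and non-residues are each $(\tfrac12+o(1))X$; one then needs the identified residues to exceed $\tfrac12 X$, which forces $\rho(u)>\tfrac12$, i.e.\ $u<\sqrt{\mathrm e}$. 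This requires a count of order $\rho(u)X$, i.e.\ Dickman's asymptotic $\Psi(X,X^{1/u})\sim\rho(u)X$ --- not Lemma~\ref{lem:smooth}, which concerns $(\log x)^\beta$-smooth numbers and only gives $\gg x^{1-2/\beta}$, a count that is $o(X)$ and therefore useless here. Moreover, once $k\geq 2$ and one restricts to $n$ coprime to $q_1\cdots q_{k-1}$, the identified residues shrink by the factor $\prod(1-1/q_i)$, which can be $\leq\tfrac12$ (e.g.\ if $2$ is among the $q_i$), and the naive $\rho(u)\cdot\prod(1-1/q_i)>\tfrac12$ then has no solution $u\geq 1$. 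This is precisely where the Banks--Guo argument must introduce a new idea beyond the $k=1$ Vinogradov trick, and your sketch, by your own admission (``follow the Banks--Guo parameter choices verbatim''), ultimately defers to their paper rather than supplying that idea. As it stands this is a reading plan, not a proof.
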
 

\begin{lemma}
\label{lem:binom}
For any $k \in \mathbb N$ we have 
\[\sum_{n\in \mathbb N, n\geq k  } {n\choose k} 2^{-n}
=2.
\]Furthermore,   
 \[
\sum_{ n\in \mathbb N, n> 3 k   } n
{n\choose k} 2^{-n}\ll  \left(\frac{29}{32}\right)^{k}  \] with an absolute implied constant.
\end{lemma}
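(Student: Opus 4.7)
For the first identity I would use the standard binomial generating function
\[
\sum_{n\geq k}\binom{n}{k} x^{n}=\frac{x^{k}}{(1-x)^{k+1}},\qquad |x|<1,
\]
which follows e.g.\ by differentiating $\sum_{n\geq 0}x^n=(1-x)^{-1}$ a total of $k$ times. Setting $x=1/2$ gives $(1/2)^k/(1/2)^{k+1}=2$, and the claim follows.

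For the tail bound I would combine a monotonicity argument with an elementary estimate of $\binom{3k}{k}$. First, for $n\geq 3k$ the ratio of consecutive summands of $\binom{n}{k}2^{-n}$ is
\[
\frac{\binom{n+1}{k}\,2^{-(n+1)}}{\binom{n}{k}\,2^{-n}}=\frac{n+1}{2(n+1-k)},
\]
and since $(n+1)/(n+1-k)$ is decreasing in $n$ on $n\geq k$, this ratio is at most $(3k+1)/(2(2k+1))\leq 3/4$ when $n\geq 3k$. Iterating yields
\[
\binom{n}{k}2^{-n}\leq \binom{3k}{k}2^{-3k}\left(\tfrac{3}{4}\right)^{n-3k}\quad\text{for every }n\geq 3k.
\]

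Next, evaluating the binomial theorem $\sum_{j=0}^{3k}\binom{3k}{j}x^j=(1+x)^{3k}$ at $x=1/2$ gives $\binom{3k}{k}2^{-k}\leq (3/2)^{3k}$, whence
\[
\binom{3k}{k}2^{-3k}\leq (3/2)^{3k}\cdot 2^{-2k}= (27/32)^{k}.
\]
Plugging both estimates into the tail, writing $n=3k+t$ and summing the geometric/arithmetico-geometric series,
\[
\sum_{n>3k} n\binom{n}{k}2^{-n}
\leq (27/32)^{k}\sum_{t\geq 1}(3k+t)(3/4)^{t}
\ll k\,(27/32)^{k}.
\]
Finally $k(27/32)^{k}\ll (29/32)^{k}$ because $(29/27)^k/k\to\infty$, completing the proof.

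There is no real obstacle here; the only point requiring any thought is choosing a clean way to estimate $\binom{3k}{k}2^{-3k}$ without invoking Stirling's formula, which the optimization of $(1+x)^{3k}/x^k$ at $x=1/2$ provides for free.
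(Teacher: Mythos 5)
Your proof is correct, and it follows the same overall strategy as the paper (generating function for the identity, ratio-of-consecutive-terms plus an estimate of the binomial coefficient near $n=3k$ for the tail), but with two small variations that make the argument cleaner. First, you apply the ratio test to $\binom{n}{k}2^{-n}$ alone, obtaining the sharper ratio bound $3/4$, and handle the extra factor $n=3k+t$ by an arithmetico-geometric sum at the end; the paper instead applies the ratio test to $n\binom{n}{k}2^{-n}$ directly, getting the weaker ratio $15/16$ but a purely geometric sum. Second, and more noticeably, you bound $\binom{3k}{k}2^{-3k}\leq(27/32)^k$ by the elementary inequality $\binom{3k}{k}x^k\leq(1+x)^{3k}$ at $x=1/2$, whereas the paper invokes Stirling's formula to get $(3k+1)\binom{3k+1}{k}\ll 3^{3k}\sqrt{k}/2^{2k}$. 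Your route avoids Stirling entirely and is arguably tidier; both then absorb the remaining polynomial factor (your $k$, the paper's $\sqrt{k}$) into the slack between $27/32$ and $29/32$. The only point worth stating slightly more carefully is the monotonicity claim: $(n+1)/(n+1-k)=1+k/(n+1-k)$ is manifestly decreasing in $n$, so the ratio is maximized at $n=3k$, where it equals $(3k+1)/(2(2k+1))\leq 3/4$ since $3k+1\leq 3k+3/2$. That step is fine as written, so the proof stands.
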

\begin{proof}
The first equation can be obtained by letting $x=1/2$ in 
$$
\frac{x^k}{(1-x)^{k+1}}
=\sum_{n=k}^\infty {n\choose k} x^n$$
that can be proved by differentiating $k+1$ times  the power series for $1/(1-x)$ around $x=0$.
For the second statement we note that if $n >3k$ then $n\geq 4  $, hence,  
 $$\frac{ (n+1) {n+1\choose k} 2^{-n-1} }{n{n\choose k} 2^{-n}} 
=
\frac{ 1+\frac{1}{n}    }{2   ( 1-\frac{k}{n+1}   )   } 
\leq 
\frac{ 1+\frac{1}{n}    }{2   ( 1-\frac{1}{3}   )   } 
=\frac{3}{4} \left(1+\frac{1}{n} \right)
\leq 
\frac{3}{4} \left(1+\frac{1}{4} \right)
=\frac{15}{16} .$$ 
By induction we can then obtain the following for $n> 3k$, 
$$
 n {n\choose k} 2^{-n}
\leq \frac{15}{16}   (n-1)  {n-1\choose k} 2^{-(n-1)}
 \leq \ldots 
\leq 
 \left(\frac{15}{16}\right)^{n-3k-1}   ( 3k+1)  {3k+1 \choose k} 2^{-3k-1}
.$$
By Stirling's approximation for the factorial we obtain 
$$  ( 3k+1)  {3k+1 \choose k} =
  ( 3k+1)  \frac{(3k+1)!}{k!(2k+1)!}
\ll  
 \frac{( 3k+1  )^{3k } \sqrt{k} }{
k^{k} 
 (2k+1  )^{2k }   
}=
 \frac
{3^{3k}  ( 1+1/3k  )^{3k } \sqrt{k} }
{
 2^{2k}  
(1+1/2k  )^{2k }   
} \ll 
 \frac
{3^{3k}   \sqrt{k} }
{  2^{2k}   } 
.$$ Hence for $n> 3k $ we have  
$$
 n {n\choose k} 2^{-n}
\ll 
 \left(\frac{15}{16}\right)^{n-3k-1} 
 \frac{3^{3k}   \sqrt{k} }{  2^{2k}   } 
2^{-3k}
\ll 
 \left(\frac{15}{16}\right)^{n-3k-1} 
 \left(\frac{29}{32}\right)^{k} 
$$ with an absolute implied constant.
Therefore,  
\[ \sum_{ n\in \mathbb N, n> 3 k   } n
{n\choose k} 2^{-n}
\ll
\sum_{  n\geq  3 k+1   } 
 \left(\frac{15}{16}\right)^{n-3k-1} 
 \left(\frac{29}{32}\right)^{k} 
\leq 
 \left(\frac{29}{32}\right)^{k} 
\sum_{  t=0   }^\infty  
 \left(\frac{15}{16}\right)^{t} 
\ll
 \left(\frac{29}{32}\right)^{k} 
.\] This is sufficient.  \end{proof}
\begin{lemma}
\label{lem:binom}
Fix  $k \in \mathbb N$, $c>0$ and assume that 
$f: \mathbb N^k\to \mathbb C$ satisfies
$$
 \max_{t_1,\ldots, t_k \leq x } |f(t_1,\ldots, t_k )|   =O\left(   x^c \right)  
 $$  for all $x\geq 1 $. Then 
\begin{equation}\label{vang1} 
\sum_ {\substack{ (m_1, m_2, \ldots, m_k) \in \mathbb N ^k \\ 1\leq m_1 <  \ldots < m_k\leq M }  } |f(p_{m_1} , \ldots, p_{m_k} ) |=
O((\log M)^c M^{c+k})
 \end{equation} and \begin{equation}\label{vang2} 
\sum_ {\substack{ (m_1, m_2, \ldots, m_k) \in \mathbb N ^k \\ 1\leq m_1 <  \ldots < m_k \\ m_k>M }  } 
\frac{|f(p_{m_1} , \ldots, p_{m_k} ) |}{2^{m_k} }
=O( (3/4)^M )
 \end{equation} hold for all $M> 1 $
with the implied constants depending at most on $c$ and $k$.  
 \end{lemma}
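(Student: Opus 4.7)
The plan is to deduce both estimates from two elementary ingredients: the prime number theorem in the crude form $p_n\ll n\log n$, and the hypothesis that $|f(t_1,\ldots,t_k)|\ll (\max_i t_i)^c$. Throughout, all implied constants are allowed to depend on $c$ and $k$.

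For \eqref{vang1}: if $1\leq m_1<\cdots<m_k\leq M$ then every argument satisfies $p_{m_i}\leq p_{\lfloor M\rfloor}\ll M\log M$, so the growth hypothesis gives $|f(p_{m_1},\ldots,p_{m_k})|\ll (M\log M)^c$. The number of strictly increasing $k$-tuples in $\{1,\ldots,\lfloor M\rfloor\}$ is at most $M^k$, and multiplying the two bounds produces $M^{k+c}(\log M)^c$, matching the claim.

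For \eqref{vang2}: I would fix the largest index $m_k=m$ and sum over the remaining indices. The number of admissible choices of $m_1<\cdots<m_{k-1}<m$ is $\binom{m-1}{k-1}\leq m^{k-1}$, and $|f(p_{m_1},\ldots,p_m)|\ll p_m^c\ll (m\log m)^c$, so the weighted sum is
\[
\ll \sum_{m>M}\frac{m^{k+c-1}(\log m)^c}{2^{m}}.
\]
Since $m^{k+c-1}(\log m)^c=o\bigl((3/2)^m\bigr)$, there exists $m_0=m_0(c,k)$ such that $m^{k+c-1}(\log m)^c\leq (3/2)^m$ for all $m\geq m_0$; for such $m$ the summand is bounded by $(3/4)^m$, and the geometric tail gives $O((3/4)^M)$ whenever $M\geq m_0$. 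For $1<M<m_0$ the whole series converges to a constant depending only on $c$ and $k$, and $(3/4)^M\geq (3/4)^{m_0}$, so the bound is recovered after enlarging the implied constant.

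I do not expect any substantive obstacle: the statement is a soft consequence of the polynomial growth condition on $f$, of the prime number theorem, and of the exponential decay in $m_k$ coming from the weight $2^{-m_k}$. The only mildly delicate point is verifying that the polynomial-logarithmic factor $m^{k+c-1}(\log m)^c$ is eventually dominated by the geometric factor $(3/2)^m$, which is routine, and handling the small values of $M$ by absorbing them into the implied constant.
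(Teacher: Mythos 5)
Your proposal is correct and follows essentially the same route as the paper: bound $|f|$ by $p_{m_k}^c\ll (m_k\log m_k)^c$ via the prime number theorem, count the $O(m_k^{k-1})$ choices of the smaller indices, and for the tail absorb the polynomial factor into $(3/2)^{m_k}$ to get the geometric bound $(3/4)^M$. Your extra care with small $M$ is harmless but not needed, since the uniform constant in $m^{c+k-1}(\log m)^c=O((3/2)^m)$ already covers all $M>1$.
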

\begin{proof} First we note that $|f(p_{m_1} , \ldots, p_{m_k} ) |=O(p_{m_k}^c)$, hence, letting $m:=m_k$, we obtain the bound 
 \[\sum_ {\substack{ (m_1, m_2, \ldots, m_k) \in \mathbb N ^k \\ 1\leq m_1 <  \ldots < m_k\leq M }  } |f(p_{m_1} , \ldots, p_{m_k} ) |
\ll
\sum_{1\leq m  \leq M } p_{m}^c 
 \sum_ {\substack{ (m_1, m_2, \ldots, m_{k-1}) \in \mathbb N ^{k-1} \\ 1\leq m_1 <  \ldots < m_{k-1}<m  }  }1
\leq  \sum_{1\leq m  \leq M } p_{m}^c m^{k-1}
 .\] By the Prime Number Theorem we have $p_m \ll m \log m$, hence, this is $$\ll \sum_{m\leq M}m^{c+k-1} (\log m)^c\ll (\log M)^c M^{c+k}.$$
This proves the first assertion. To prove the second a similar argument yields
\[ 
\sum_ {\substack{ (m_1, m_2, \ldots, m_k) \in \mathbb N ^k \\ 1\leq m_1 <  \ldots < m_k \\ m_k>M }  } 
\frac{|f(p_{m_1} , \ldots, p_{m_k} ) |}{2^{m_k} }\ll
\sum_{m>M} \frac{m^{c+k-1}(\log m )^c}{2^m}
\ll
\sum_{m>M} \frac{m^{c+k}}{2^m}
.\] Using     $m^{c+k} =O((3/2)^m)$ shows that 
 the right-hand side is $O((3/4)^M)$.
\end{proof}

The next result is from~\cite[Theorem 2.1]{MR3086819}.
\begin{lemma}
[Linnik's constant]
 \label{lem:lin}
There exists a constant $C>0$ such that for every $q\in\mathbb N$ and $a\in \mathbb Z$ coprime to $q$ there exists a prime $p\leq C q^5$ satisfying 
$p\equiv a \md q $.
\end{lemma}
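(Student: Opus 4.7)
The statement is the quantitative form of Linnik's theorem on the least prime in an arithmetic progression, with the numerical exponent $5$ due to Xylouris refining earlier work of Heath-Brown. The plan is simply to invoke the cited reference, since a self-contained proof lies well outside the scope of this paper; the underlying strategy, which I would follow if forced to reconstruct it, combines three classical ingredients from the analytic theory of Dirichlet $L$-functions modulo $q$.

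First I would set up the explicit formula for $\psi(x,q,a)=\sum_{n\leq x,\, n\equiv a \bmod q}\Lambda(n)$ in terms of the nontrivial zeros $\rho=\beta+i\gamma$ of $L(s,\chi)$ ranging over all $\chi \bmod q$. To produce a prime $p\equiv a \bmod q$ with $p\leq Cq^5$ it suffices to prove $\psi(Cq^5,q,a)>0$, which reduces to a sufficiently sharp upper bound for $\sum_\rho x^\rho/\rho$. Next I would apply a log-free zero density estimate of the shape $\sum_{\chi \bmod q} N(\sigma,T,\chi)\ll (qT)^{c(1-\sigma)}$ with an absolute constant $c$; this is the key input that replaces ineffective Siegel-type bounds and is precisely what keeps the final constant $C$ in the lemma absolute rather than depending on $q$.

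Finally I would control the possible exceptional real (Siegel) zero $\beta_1$ close to $1$ by means of the Deuring--Heilbronn phenomenon: if such a $\beta_1$ exists, then every other zero of every $L(s,\chi \bmod q)$ is repelled a quantifiable distance from the line $\Re(s)=1$, which more than compensates for the damage caused by $\beta_1$ itself. The hard part, and the essence of Xylouris's contribution, is the \emph{numerical} optimization of the constants entering each of these three ingredients in order to push the exponent down to $5$ (it stood at $5.5$ in Heath-Brown's work, and the original Linnik argument gave only some unspecified large value). For the purposes of the present paper, however, this whole machinery is black-boxed via the cited reference, so no independent proof is carried out here.
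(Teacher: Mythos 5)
Your proposal matches the paper exactly: the paper gives no proof of this lemma, stating only that it is \cite[Theorem 2.1]{MR3086819}, and treating Xylouris's result as a black box is entirely appropriate here. Your sketch of the underlying machinery (explicit formula, log-free zero density estimates, Deuring--Heilbronn repulsion of the exceptional zero) is an accurate description of how the cited result is actually proved, but none of it is needed for the paper.
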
  
 \section{The main theorem} 
Let $k\in \mathbb N$ and
assume that we are given any function $f:\mathbb N^k \to \mathbb C$ such that 
 \begin{equation}\label{cond:bnd}
 \max_{1\leq t_1,\ldots, t_k \leq x } |f(t_1,\ldots, t_k )|   =O\left(    x^{4\sqrt{\mathrm e} -\epsilon } \right)  
\end{equation}
for some constant $\epsilon>0$. The exponent $4\sqrt{\mathrm e}$ allows to control the size of $f$ at the first prime quadratic non-residues. 
Any improvement on the exponent in Lemma~\ref{lem:burgh}
 will allow to relax assumption~\eqref{cond:bnd} in what follows.
    \begin{theorem}
\label{thm:maingen} Assume that   $f:\mathbb N^k \to \mathbb C $ satisfies~\eqref{cond:bnd}.
We have \[
\lim_{x\to \infty } \frac{1}{\pi(x) } 
 \sum_{\substack{  \textrm{prime } p \\ p\leq x } }f(n_1(p), \ldots , n_k(p)  ) = 
\sum_{\substack{  (m_1,\ldots, m_k)  \in \mathbb N^k
\\ 1\leq m_1 <  \ldots < m_k } } 
\frac{f(p_{m_1} , \ldots, p_{m_k} )}{2^{m_k} } .\]
\end{theorem}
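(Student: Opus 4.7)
The plan is to partition primes $p\leq x$ by the indices $\vec m=(m_1,\dots,m_k)$ of their first $k$ prime quadratic non-residues. For each $\vec m$ with $1\leq m_1<\dots<m_k$, let $N(x;\vec m)$ denote the number of primes $p\leq x$ for which $n_i(p)=p_{m_i}$ for all $i\leq k$. The event $\{n_i(p)=p_{m_i}\ \forall i\}$ is equivalent to a complete specification of Legendre symbols $(p_j/p)$ for every $j\leq m_k$: the primes indexed by $m_1,\dots,m_k$ are non-residues and the remaining $m_k-k$ primes are residues. Consequently
\[\sum_{p\leq x}f(n_1(p),\dots,n_k(p))=\sum_{\vec m}f(p_{m_1},\dots,p_{m_k})\,N(x;\vec m).\]

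\textbf{Main term.} Introduce a slowly-growing cutoff $M=M(x)\to\infty$ (for concreteness, $M=\log\log\log x$). For tuples with $m_k\leq M$, I would apply Lemma~\ref{fixlem} (Siegel--Walfisz) with $n=m_k$ and a large $A$, producing $N(x;\vec m)=\pi(x)/2^{m_k}+O_A(p_1\cdots p_{m_k}\,x/(\log x)^A)$. The accumulated main term, divided by $\pi(x)$, is $\sum_{m_k\leq M}f(p_{m_1},\dots,p_{m_k})/2^{m_k}$, which converges to the right-hand side of the theorem as $M\to\infty$ by the tail estimate of Lemma~\ref{lem:binom} combined with the growth hypothesis~\eqref{cond:bnd}. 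The aggregated error terms are $o(\pi(x))$ for $M$ growing slowly enough relative to $A$.

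\textbf{Tail via Burgess and Brun--Titchmarsh.} Apply Lemma~\ref{lem:banksguo} to restrict to $m_k\leq M_1$ with $p_{M_1}=O(x^{c_0})$ for some fixed $c_0>1/(4\sqrt{\mathrm e})$ arbitrarily close to $1/(4\sqrt{\mathrm e})$. In the range $M<m_k\leq M_1$ with $p_1\cdots p_{m_k}\leq x^{9/10}$, Lemma~\ref{lem:brun} (Brun--Titchmarsh) yields $N(x;\vec m)\ll\pi(x)/2^{m_k}$; combined with the growth bound $|f|\ll p_{m_k}^{4\sqrt{\mathrm e}-\epsilon}$ and the tail estimate of Lemma~\ref{lem:binom}, this contribution is $\ll(3/4)^M\pi(x)=o(\pi(x))$.

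\textbf{Hardest subcase and main obstacle.} The delicate range is $p_1\cdots p_{m_k}>x^{9/10}$, where Brun--Titchmarsh is unavailable and the prime number theorem forces $m_k\gg\log x/\log\log x$. My plan here is to combine Lemma~\ref{lem:largesieve} (Linnik's large sieve) with $q=p_{m_1}\cdots p_{m_k}$ and $z=p_{m_k}$, together with Lemma~\ref{lem:smooth} (Hildebrand) taking $\beta=\log p_{m_k}/\log(2\log x)$, giving $N(x;\vec m)\ll x^{4/\beta}$. This beats $\pi(x)$ by a polynomial factor once $p_{m_k}$ is polynomially large in $x$; for smaller $p_{m_k}$ in this subcase, I would instead apply Brun--Titchmarsh to the truncated initial set of conditions (up to the largest $m^*$ with $p_1\cdots p_{m^*}\leq x^{9/10}$) to obtain $N(x;\vec m)\ll\pi(x)/2^{m^*}$ with $m^*\sim\log x/\log\log x$. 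The main obstacle lies precisely in this final subcase: the growth exponent $4\sqrt{\mathrm e}-\epsilon$ in~\eqref{cond:bnd} is calibrated exactly to Burgess's exponent $1/(4\sqrt{\mathrm e})$ in Lemma~\ref{lem:banksguo}, so the weighted tail sum just barely closes and a careful parameter balance between large-sieve, smooth-number, and Brun--Titchmarsh estimates is required.
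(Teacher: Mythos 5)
Your overall architecture is the same as the paper's: Siegel--Walfisz for the main term over small $m_k$, direct Brun--Titchmarsh while $p_1\cdots p_{m_k}\leq x^{9/10}$, and then a combination of a truncated Brun--Titchmarsh bound with a large-sieve/smooth-number bound for the remaining tuples, with Banks--Guo controlling $\max|f|$. The main term and the first tail range are handled correctly.

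The genuine gap is in your final subcase, namely where you hand off between the truncated Brun--Titchmarsh bound and the large sieve. The truncated bound $N(x;\vec m)\ll\pi(x)2^{-m^*}$ with $m^*\sim\tfrac{9}{10}\log x/\log\log x$ saves only a factor $2^{m^*}=x^{O(1/\log\log x)}=x^{o(1)}$, while each summand carries a weight $|f|\ll p_{m_k}^{4\sqrt{\mathrm e}-\epsilon}$ and the number of admissible tuples with $p_{m_k}\leq y$ is of order $y^{k+o(1)}$. Thus the best you can extract from this bound for the tuples with $p_{m_k}\leq y$ is $\ll y^{4\sqrt{\mathrm e}+k}\,\pi(x)\,2^{-m^*}$, which is $o(\pi(x))$ only when $\log y=o(\log x/\log\log x)$. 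You propose to run this bound all the way up to $p_{m_k}$ ``polynomially large in $x$,'' so the range $\exp\bigl(\log x/\sqrt{\log\log x}\bigr)\ll p_{m_k}\ll x^{\delta}$ is covered by neither of your two tools as you deploy them. The paper's resolution is to hand off at $p_{m_k}=(\log x)^{\beta}$ for a \emph{fixed} large $\beta$ (it takes $\beta=20/\epsilon^2$): below that threshold both $\max|f|$ and the tuple count are $(\log x)^{O(1)}$, which the saving $2^{m^*}$ easily beats, and above it the large sieve with smoothness level $(\log x)^{\beta}$ bounds the number of relevant primes by $O(x^{4/\beta+o(1)})$, which multiplied by $\max|f|=O(x^{1-\epsilon^2})$ (via Banks--Guo and \eqref{cond:bnd}) is $o(\pi(x))$. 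Relatedly, your choice $\beta=\log p_{m_k}/\log(2\log x)$, which tends to infinity with $x$, is not licensed by Lemma~\ref{lem:smooth} as stated: there $\beta$ must be fixed and the implied constant depends on it. The paper sidesteps this by not taking the smoothness level equal to $p_{m_k}$ but only to $(\log x)^{\beta}$, using that $n_k(p)>(\log x)^{\beta}$ already forces all primes up to $(\log x)^{\beta}$, apart from at most $k-1$ exceptions, to be residues.
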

\begin{proof}
We write the sum over $p$
in the theorem as  
\[
\sum_{\substack{ (m_1,\ldots, m_k)  \in \mathbb N^k
\\ 1\leq m_1 <  \ldots < m_k } } 
f(p_{m_1} , \ldots, p_{m_k} )\#\{p\leq x: 1\leq i \leq k\Rightarrow n_i(p)=p_{m_i}  \}
\] and we split the sum according to the size of the largest prime as follows:
 \begin{enumerate}
\item $ p_{m_k} \leq A \log \log x $, 
\item $  A \log \log x <p_{m_k}  \leq \frac{\log x }{5}  ,$ 
\item $   \frac{\log x }{5}    <p_{m_k} \leq  (\log x )^{\beta},$ 
\item $ p_{m_k} >  (\log x )^{\beta}  $,
\end{enumerate}where $A $ and $\beta $ are   constants  that both strictly 
exceed $1$ and that will be specified later. 
Letting $n$ be the largest integer with $p_n \leq A \log \log x $ and using Lemma~\ref{fixlem}, 
the contribution of the first case is 
\[ \sum_ {1\leq m_1 <  \ldots < m_k\leq n   } f(p_{m_1} , \ldots, p_{m_k} ) 
  \left(\frac{\pi(x)}{2^{m_k} }  
+   O \left( 
\frac{  \prod_{1\leq j \leq m_k }p_j}{(\log x )^{  A\log 3  }}
\cdot
\frac{ x  }{(\log x )^{  2A }}
\right)\right)
.\]The prime number theorem shows that $p_1 \cdots p_t \leq 3^{p_t }$ holds for all large $t $, hence, 
 $$   \prod_{1\leq j \leq m_k }p_j  \leq  p_1 \cdots p_n   \leq 3^{ p_n } \leq 3^{ A \log \log x } = (\log x )^{A \log 3 }.$$
The contribution then becomes 
\begin{align*} 
&\sum_ {1\leq m_1 <  \ldots < m_k\leq n   } f(p_{m_1} , \ldots, p_{m_k} ) 
  \left(\frac{\pi(x)}{2^{m_k} }  
+   O \left( 
 \frac{ x  }{(\log x )^{  2A }}
\right)\right)
\\
=\pi(x)
& 
\sum_ {1\leq m_1 <  \ldots < m_k\leq n   }  
\frac{f(p_{m_1} , \ldots, p_{m_k} ) }{2^{m_k} } 
 +   O \left(
 \frac{ x  }{(\log x )^{  2A }}
 \sum_ {1\leq m_1 <  \ldots < m_k\leq n   } |f(p_{m_1} , \ldots, p_{m_k} ) |
\right) 
.\end{align*}
 By~\eqref{vang1} one gets 
   $$
\sum_ {1\leq m_1 <  \ldots < m_k\leq n   } |f(p_{m_1} , \ldots, p_{m_k} ) |
 \ll (\log n)^{4\sqrt \mathrm{e}} n^{4\sqrt \mathrm{e}+k} \ll n^{12+k}
  \ll (\log \log x )^{12+k } \ll (\log x)^A
.$$  
Therefore,  the first case contributes 
\begin{align*}
& \pi(x) 
\sum_ {1\leq m_1 <  \ldots < m_k\leq n   }  
\frac{f(p_{m_1} , \ldots, p_{m_k} ) }{2^{m_k} } 
 +   O \left(
 \frac{ x  }{(\log x )^{   A }}
 \right) 
\\=
&\left( 
\sum_ {1\leq m_1 <  \ldots < m_k \leq n   }  \frac{ f(p_{m_1} , \ldots, p_{m_k} ) 
}{2^{m_k}}  
\right) \pi(x)+o(\pi(x) )
 \end{align*} due to $\pi(x) \sim x/\log x$ and our assumption $A>1 $.  By~\eqref{vang2} the sum over $m_i$ converges absolutely, hence, 
the restriction $m_k \leq n $ can be removed at the cost of an admissible error.
In particular, the first case contributes 
   \[
\left( 
\sum_ {1\leq m_1 <  \ldots < m_k   }  \frac{ f(p_{m_1} , \ldots, p_{m_k} ) 
}{2^{m_k}}  
\right) \pi(x) (1+o(1) ) .\] 
It now  remains to show that all the other cases contribute $o(\pi(x) )$.

Let us next deal with Case $(2)$. If $p_t\leq \frac{\log x }{ 5} $ we obtain    $   p_1 \cdots p_t \leq 3^{p_t }  \leq x^{99/100 } $, hence, 
Lemma~\ref{lem:brun} yields  \begin{equation}\label{eq:brunti}
 \#\{p\leq x: 1\leq i \leq k\Rightarrow n_i(p)=p_{m_i}  \}
 \ll \frac{\pi(x) }{ 2^{m_k} }.\end{equation}
  Thus,   \begin{align*} &
\sum_{\substack{   1\leq m_1 <  \ldots < m_k\\ A \log \log x < p_{m_k} \leq \frac{ \log x }{5} } }   f(p_{m_1} , \ldots, p_{m_k} ) 
\#\{p\leq x: 1\leq i \leq k\Rightarrow n_i(p)=p_{m_i}  \}
\\
 &\ll 
 \pi(x) 
\sum_{\substack{  
1\leq m_1 <  \ldots < m_k\\ A \log \log x < p_{m_k} } } 
\frac{|f(p_{m_1} , \ldots, p_{m_k} ) | }{2^{ m_k}}
.\end{align*} This is $o(\pi(x))$ as the sum over $m_k$ is  the tail of an absolutely convergent series due to~\eqref{vang2}. 

Let us now move to Case $(3)$.
Every prime $\ell \leq \frac{\log x }{5}$ that is not in the set $\{p_{m_1}, p_{m_2}, \ldots, p_{m_{k-1}}\}$
must be a quadratic-residue modulo $p$ for each $p$ in Case $(3)$. Hence, 
denoting by $r$ the largest integer such that $p_r\leq \frac{\log x }{5}$, this means that 
 $$ j \in \{1,2\ldots, r\} \setminus \{m_1,m_2,\ldots, m_{k-1}\} \Rightarrow \left(\frac{p_j}{p}\right)=1 .$$Thus, applying Lemma~\ref{lem:brun}
 we obtain 
 \begin{align*} &
\sum_{\substack{   1\leq m_1 <  \ldots < m_k\\  \frac{ \log x }{5} < p_{m_k}  \leq (\log x )^\beta } }  | f(p_{m_1} , \ldots, p_{m_k} ) |
\#\{p\leq x: 1\leq i \leq k\Rightarrow n_i(p)=p_{m_i}  \}
 \\
\ll &\frac{\pi(x)}{2^{r-k} }
\sum_{\substack{   1\leq m_1 <  \ldots < m_k\\  \frac{ \log x }{5} < p_{m_k}  \leq (\log x )^\beta } }  | f(p_{m_1} , \ldots, p_{m_k} ) |
\ll \frac{\pi(x)}{2^r}\sum_{\substack{   1\leq m_1 <  \ldots < m_k\\    m_k   \leq (\log x )^\beta } }  | f(p_{m_1} , \ldots, p_{m_k} ) |
 ,\end{align*} with an  implied constant depending at most on $k$. By~\eqref{vang1} 
 the   sum over $ m_i  $  is 
\[ \ll (\log \log x )^{4\sqrt \mathrm{e}} (\log x) ^{\beta( 4\sqrt \mathrm{e}+k)}
\ll (\log x) ^{\beta( 7+k)}
  . \]  Using the fact that $p_r \sim r \log r  $
and $p_r \sim \frac{\log x }{5}$ we infer that $r\sim \frac{\log x }{5\log \log x } $.
In particular, $(\log x )^{\beta(7+k)} \leq 2^{r/2}$ for all large $x\geq 2 $, hence, 
 \[  \frac{\pi(x)}{2^{r }}  
(\log x )^{\beta(8+k)}\ll
 \frac{\pi(x)}{2^{r/2}}=o(\pi(x) ) .\] To deal with Case $(4)$ we use Lemmas~\ref{lem:burgh}-\ref{lem:banksguo}   as follows:
\begin{align*}
&\sum_{\substack{   1\leq m_1 <  \ldots < m_k\\    p_{m_k}  >  (\log x )^\beta } }  | f(p_{m_1} , \ldots, p_{m_k} ) |
\#\{p\leq x: 1\leq i \leq k\Rightarrow n_i(p)=p_{m_i}  \}
\\
\ll &
\max\left\{ |f(t_1,\ldots, t_k )| :1\leq t_1,\ldots,t_k   \leq x^{\frac{(1+ \epsilon)}{4\sqrt \mathrm e } } \right\}
\\
\times 
&
\sum_{ 1\leq m_1 <  \ldots < m_{k-1} \in \mathbb N  }
\#\{p\leq x: n_k(p)> (\log x )^\beta, n_i(p)=p_{m_i} \forall 1\leq i \leq k-1 \}
. \end{align*} 
By assumption~\eqref{cond:bnd} the maximum is $O(x^{1-\epsilon^2})$, whereas
  Lemma~\ref{lem:largesieve} with $q:=p_{m_1} \cdots p_{m_{k-1}}$ shows that 
 the overall 
contribution is 
\[\ll
\frac{x^{3-\epsilon^2}}{ \#\{n\leq x^2: \gcd(n,q)=1 , p\mid n\Rightarrow p\leq (\log z)^\beta \} } .  \]  
Alluding to Lemma~\ref{lem:smooth}
shows that this is $\ll 
x^{1-\epsilon^2+ 10/ \beta   } 
$.  Finally, the proof concludes by taking $  \beta=20/\epsilon^2 $.
\end{proof}
 
\section{Applications}
\subsection{Proof of Theorem~\ref{thm:main}}
Taking $f(t_1,\ldots, t_k )=t_k$ in Theorem~\ref{thm:maingen} proves that as $x\to \infty$, 
\[
\frac{1}{\pi(x) } \sum_{p\leq x} n_k(p)\
\to \mu_k:=
\sum_{n\geq k  } \frac{p_n}{2^n}
\#\{\b t \in \mathbb N^{k-1}: 1\leq t_1 < t_2< \ldots < t_{k-1} \leq n-1\}. \]
We can simplify this to 
 \[\mu_k 
=\sum_{n\geq k  } \frac{p_n}{2^n}{n-1\choose k-1} 
=k
\sum_{n\geq k  } \frac{p_n}{n}{n\choose k} 2^{-n}
. \] It now 
remains to 
  prove the asymptotic $\mu_k\sim p_{2k}$, which is equivalent to 
$$\lim_{k\to \infty} 
\frac{\mu_k }{    k \log k } = 2 
 $$ by the Prime Number Theorem. By the Prime Number Theorem     we have 
$p_n \sim n \log n \leq n^2 $ for all large $n$, hence, 
 $$
 \sum_{n>3k }\frac{p_n}{n(\log k ) }  {n\choose k} 2^{-n}
\leq   \sum_{n>3k }n {n\choose k} 2^{-n}\ll   \left(\frac{29}{32} \right)^k=o(1) 
 $$  by Lemma~\ref{lem:binom}.   We deduce 
\begin{equation}\label{eq:metafightfire}
 \frac{\mu_k}{k(\log k ) }
=
\sum_{n\in [k,3k]  } \frac{p_n}{n(\log k)}{n\choose k} 2^{-n}
+
o(1)
 .\end{equation}
Let us 
  fix an arbitrary 
 $\epsilon\in (0,1/2)$.
Since  $p_n \sim n \log n $ we know that 
for all sufficiently large $k $ and all $n\in [k,3k] $
one has $$ 1-\epsilon \leq 
\frac{p_n}  {n(\log k ) }\leq  1+\epsilon 
. $$ 
Therefore, 
\[
(1-\epsilon  )
 \sum_{n\in [k,3k]   } {n\choose k} 2^{-n}
\leq \frac{1}{\log k }
 \sum_{n\in [k,3k]  } \frac{p_n}{n}{n\choose k} 2^{-n}
\leq (1+\epsilon )
 \sum_{n\in [k,3k]   } {n\choose k} 2^{-n}.\]
Combining the two statements in Lemma~\ref{lem:binom}  
shows that  as $k\to\infty $ one has 
\[ \sum_{n\in [k,3k]   } {n\choose k} 2^{-n}=2 +o(1).\]
This implies that for all sufficiently large $k $ one has 
 \[
2(1-2\epsilon  )
 \leq \frac{1}{\log k }
 \sum_{n\in [k,3k]  } \frac{p_n}{n}{n\choose k} 2^{-n}
\leq 2(1+2\epsilon ) .\] 
By~\eqref{eq:metafightfire} this means that  
$  \mu_k\sim 2 k\log k$, which is sufficient.   

\subsection{Proof of Theorem~\ref{thm:gap}}
Taking $k=2$ and $f(t_1, t_2 )=\mathbf 1(zt_1<t_2 )$ in Theorem~\ref{thm:maingen} proves that as $x\to \infty$, 
\[
 \frac{
\#\{p\leq x: n_2(p) > z n_1(p)  \} 
}{\pi(x) } =\frac{1}{\pi(x)}
\sum_{p\leq x } f(n_1(p), n_2(p) ) \to 
\sum_{m\geq 1 }
\sum_{t\geq m+1 }
 \frac{\mathds 1 (zp_m<p_t)}{2^t}
.\] 
The sum over $t $ equals $\sum_{t\geq 1+n(m,z) } 2^{-t} =2^{-n(m,z)},$ where $n(m,z)$ is defined in the statement of Theorem~\ref{thm:gap}.

\subsection{Proof of Theorem~\ref{thm:miniaverg}}
Taking $k=2$ and $f(t_1, t_2 )=\min\{t_1,t_2-t_1\}$ in Theorem~\ref{thm:maingen} proves that as $x\to \infty$, 
\[
  \frac{1}{\pi(x) } 
\sum_{p\leq x } M(p) =\frac{1}{\pi(x)}
\sum_{p\leq x } f(n_1(p), n_2(p) ) 
\to 
\sum_{  1\leq m < t } \frac{\min\{p_m,p_t-p_m\}}{2^t}
  .\] 
 
\subsection{Proof of Theorem~\ref {thm:linik}}
We first prove the unconditional lower bound.
 For  $y\geq 2 $ we let $p_m $ be the largest prime with $p_m \leq   y/2 $ 
and $ p_n$ the largest prime with $p_n \leq y$. We let $q= 8 \prod_{j=2}^n p_j$
and define $$
\epsilon_m =-1,
j\in [1,n]\setminus \{m\} \Rightarrow \epsilon_j=1
.$$
As in the proof of Lemma~\ref{fixlem} there exists $t\md q$ with $\gcd(t,q)=1$ such that  
all primes $p\equiv t \md q $ satisfy  $(\frac{p_i}{p})=\epsilon_i$ for all $1\leq i \leq n $.
By Lemma~\ref{lem:lin} there exists such a prime in the range $p\leq C q^5$, hence, 
as $x\to\infty $ one has 
$$\log p \leq \log C+5 \log q =
\log C+5  \sum_{p\leq y}\log p  =5y  (1+o(1))  $$
 by the prime number theorem.
Furthermore, $n_2(p)>p_n$, thus, 
\[M(p)\geq \min\{p_m,p_n-p_m\}=  y/2   (1+o(1) ).\]
This shows that $M(p)\geq \frac{\log p }{10} (1+o(1))$, hence, for all fixed $c\in (0,1/10)$ the inequality 
 $M(p)\geq c \log p  $ holds infinitely often.

Let us now prove the unconditional lower bound. With $y, p_m,p_n$ and $\epsilon_m,\epsilon_j $ as above, we denote 
\[\mathcal P=\left\{p\in (x,2x]: 1\leq i \leq n \Rightarrow \left(\frac{p_i}{p}\right)=\epsilon_i \right\} .\]
Then, as in~\cite[pg. 128]{MR0337847}, we obtain  \begin{equation}\label{eq:montg}
 2^{\pi(y)-1}\sum_{p\in \mathcal P}  \log p   = \sum_{\substack{ k\in \mathbb N \\p'\mid k \Rightarrow p'\leq y  }}
(\psi(2x;\chi_k)-\psi(x;\chi_k)),\end{equation}
where $\psi(t;\chi):=
\sum_{1\leq p \leq x } \chi(p) $ and $\chi_k$ is a Dirichlet character   that is 
defined through 
$$\chi_k(p)
=\prod_{p'\mid k } \left(\frac{p_1}{p}\right)
.$$ We note here that $k$  divides $8\prod_{j=2}^n p_j$, hence, $\log k \ll \sum_{p\leq y} \log p \ll y $.
The contribution of $k=1$ gives $\sum_{x<p\leq 2x } (\log p ) = x+O(x^{1/2}\log^2 x) $ by the Riemann Hypothesis.
For the other terms the character $\chi_k$ is   non-principal, thus, GRH is known to imply that $\psi(x;\chi_k)\ll x^{1/2} \log^2 (kx)$,
with an absolute implied constant.
Hence, the right-hand side of~\eqref{eq:montg} equals
 \[  x+O(x^{1/2}\log^2 x)+O\left(\sum_{ k\mid 8 \prod_{j=2}^n p_j}
 x^{1/2} \log^2( k x ) \right)
= x+O\left(2^{\pi(y) } x^{1/2} ((\log x)^2+ y^2)\right) .
\] Denoting the implied constant by $c_1$, there exists  a small positive constant $c_0=c_0(c_1)$ such that 
if  $y=c_0(\log x)  (\log \log x ) $ then 
\[
c_1 2^{\pi(y) } x^{1/2} ((\log x)^2+ y^2)
\leq 
x/2
 \] by the prime number theorem $\pi(y) \sim y/\log y$. 
Thus, the left-hand side of~\eqref{eq:montg}
is strictly positive. This is sufficient.

\footnotesize

\end{document}